\documentclass[12pt]{amsart}
\usepackage{youngtab}
\usepackage{graphicx,tikz, tikz-cd}
\usepackage{caption}
\usepackage{subcaption}
\usepackage{mathrsfs,hyperref}
\usepackage{cancel}
\usepackage{fullpage}
\usepackage{framed}

\numberwithin{equation}{section}

\newtheorem{theorem}{Theorem}[section]

\newtheorem{lemma}[theorem]{Lemma}

\newtheorem{corollary}[theorem]{Corollary}

\theoremstyle{definition}

\newtheorem{example}[theorem]{Example}

\newtheorem{remark}[theorem]{Remark}
\newtheorem{algorithm}[theorem]{Algorithm}

\title{Dehn coloring and the dimer model for knots}

\author{Alexander Madaus}
\address{Department of Mathematics and Computer Science, 
Washington College, 300 Washington Avenue, Chestertown, MD
21620 U.S.A.}
\email{amadaus2@washcoll.edu}

\author{Maisie Newman}
\address{Department of Mathematics and Computer Science, Washington College, 300 Washington Avenue, Chestertown, MD 21620 U.S.A.}
\email{mnewman2@washcoll.edu}

\author{Heather M. Russell}
\address{Department of Mathematics and Computer Science, University of Richmond, U.S.A.}
\email{hrussell@richmond.edu}

\begin{document}

\begin{abstract}
Fox coloring provides a combinatorial framework for studying dihedral representations of the knot group.  The less well-known  concept of Dehn coloring captures the same data. Recent work of Carter-Silver-Williams clarifies the relationship between the two focusing on how one transitions between Fox and Dehn colorings. In our work, we relate Dehn coloring to the dimer model for knots showing that Dehn coloring data is encoded by a certain weighted balanced overlaid Tait graph. Using Kasteleyn theory, we provide graph theoretic methods for computing the determinant and Smith normal form of a knot. These constructions are closely related to Kauffman's work on a state sum  for the Alexander polynomial.
\end{abstract}

\thanks{The authors would like to thank Oliver Dasbach for his guidance and comments, the FURST program (NSF DMS \#1156273) at CSU Fresno for research support, and Jozef Przytycki for his consistent support and mentorship within the knot theory community.}

\maketitle

\section{Introduction}

An $n$-Fox coloring of a knot diagram assigns an element of $\mathbb{Z}/n$ to each {\it arc} subject to a system of congruence relations coming from the crossings. For all $n\in \mathbb{N}$, the number of $n$-Fox colorings is a knot invariant. The collection of $n$-colorings forms a $\mathbb{Z}$-module, and one can study the structure of this module to get a finer invariant. An $n$-Dehn coloring assigns an element of $\mathbb{Z}/n$ to each bounded {\it face} of a diagram such that a different system of crossing congruence conditions holds. Fox and Dehn coloring are related by a change of presentation of the knot group. 

The concept of Fox coloring was first introduced by Ralph Fox to Haverford College undergraduates as a way to the study of maps from knot groups into dihedral groups.  Fox colorings of arcs correspond to maps written in terms of the Wirtinger generators. Fox coloring is well-studied and has led to some interesting results including the Kauffman-Harary conjecture \cite{harary1999knots, livingston1993knot, mattman2009proof, przytycki19983}. Less studied are Dehn colorings \cite{carter2014three, kauffman2006formal}. Here, labeling faces of the knot diagram corresponds to maps defined on Dehn generators.

In this paper, we study knot colorings from the perspective of the dimer model for knots which is introduced in \cite{CDR}. This is closely related to Kauffman's state sum model for the Alexander polynomial \cite{kauffman2006formal} where marked states are replaced with dimer coverings. We show how Dehn coloring data can be directly obtained from the balanced overlaid Tait (BOT) graph. This is especially convenient when the knot diagram is alternating. We show that if one was to attempt the same graph theoretic interpretation of Fox coloring data, the resulting graph is not always planar. In this sense, Dehn coloring is a preferable framework.

Via Kasteleyn theory, we offer a simple proof of the classical result relating the number of maximal trees in the Tait graph of an alternating diagram to the knot determinant \cite{crowell1959nonalternating}. (This has since been generalized in the context of ribbon graphs coming from knot projections on the Turaev surface \cite{dasbach2008jones}.)  We give an algorithm for determining the Smith normal form of the knot (i.e. the structure of the coloring module) via the BOT graph. We give sufficient conditions on the BOT graph for a knot to have trivial Smith normal form. 

\section{Fox and Dehn coloring}

This section explores the relationship between Fox and Dehn $n$-colorings by building on the recent work of Carter-Silver-Williams \cite{carter2014three}. We give an explicit formula for a dihedral homomorphism coming from a Dehn coloring in Lemma \ref{reps} and show that the given map agrees with the Carter-Silver-Williams combinatorial bijection between Fox and Dehn colorings. In the exposition that follows, we fix $n\in \mathbb{N}$ and a diagram $D$ of a knot $K$. Let $A$ be the set of arcs and $F$ the set of faces of $D$.

A {\it Fox $n$-coloring} of $D$ is a map $C:A\rightarrow \mathbb{Z}/n$ with the property that twice the label of the overstrand at each crossing is congruent to the sum of the labels of the understands modulo $n$. This is illustrated in Figure \ref{coloringrels}. We denote the set of all Fox $n$-colorings of $D$ by $\textup{Col}_n(D)$.  A {\it Dehn $n$-coloring} of $D$ is a map $\widetilde{C}: F \rightarrow \mathbb{Z}/n$ for which  the sum of the face labels on either side of the over strand are congruent to one another modulo $n$ as one passes through each crossing and the unbounded face maps to 0. The local relation for Dehn coloring is shown in Figure \ref{coloringrels}. We denote the set of Dehn $n$-colorings by $\widetilde{\textup{Col}}_n(D)$.  Fox and Dehn 5-colorings of a figure-eight knot diagram can be found in Figure \ref{corrcolors}.

\begin{figure}[h]
\begin{subfigure}[b]{0.3\textwidth}
\begin{tikzpicture}[scale=.5]
\draw[style=thick] (-1.3,-1.3) to[out=45,in=225] (1.3,1.3);
\draw[style=thick] (-1.3, 1.3) to [out=315,in=135] (-.15, .15);
\draw[style=thick](.15,-.15) to [out=315, in=135](1.3,-1.3);
\node at (-1,-.4) {$a$};
\node at (-.5,1.1) {$b$};
\node at (1, -.4) {$c$};

\node at (6, 0) {$2a \equiv b+c \;(\textup{mod } n)$};
\end{tikzpicture}
\caption{Fox coloring}
\end{subfigure} \hspace{.55in}
\begin{subfigure}[b]{.3\textwidth}
\begin{tikzpicture}[scale=.5]
\draw[style=thick] (-1.3,-1.3) to[out=45,in=225] (1.3,1.3);
\draw[style=thick] (-1.3, 1.3) to [out=315,in=135] (-.15, .15);
\draw[style=thick](.15,-.15) to [out=315, in=135](1.3,-1.3);
\node at (-1.3,0) {\fbox{$a$}};
\node at (0,-1.3) {\fbox{$b$}};
\node at (0, 1.3) {\fbox{$c$}};
\node at (1.3, 0) {\fbox{$d$}};

\node at (6, 0) {$a+b \equiv c+d \;(\textup{mod } n)$};
\end{tikzpicture}
\caption{Dehn coloring}
\end{subfigure}
\caption{Coloring congruence relations}\label{coloringrels}
\end{figure}
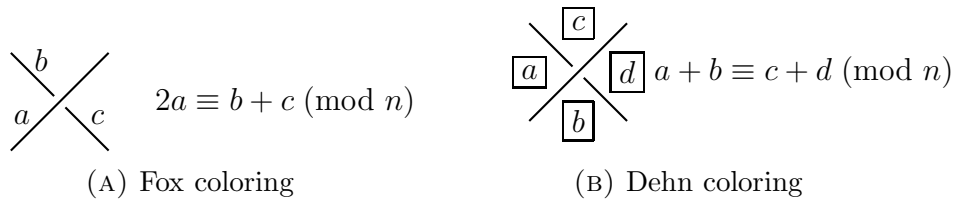

Both $\textup{Col}_n(D)$ and $\widetilde{\textup{Col}}_n(D)$ have a $\mathbb{Z}$-module structure under vector addition modulo $n$. We therefore refer to them as the Fox and Dehn $n$-coloring modules. Another fundamental property of these modules is that they are independent of choice of diagram for a knot \cite{przytycki19983}. Hence they are knot invariants, and we can write $\textup{Col}_n(K)$ and $\widetilde{\textup{Col}}_n(K)$.

We will briefly describe the $\mathbb{Z}$-module isomorphism $\phi:\widetilde{\textup{Col}}_n(D)\rightarrow \textup{Col}_n(D)$ between Dehn and Fox $n$-colorings given in \cite{carter2014three}.  (Note that all addition below is computed modulo $n$.) Let $\widetilde{C}$ be a Dehn $n$-coloring, and say $a\in A$. Define $\phi(\widetilde{C})(a) = \widetilde{C}(f) + \widetilde{C}(f')$ where $f, f'\in F$ are faces of $D$ separated by arc $a$. The Dehn coloring conditions ensure that the resulting labeling is well-defined and yields a Fox $n$-coloring. The process of obtaining a Dehn coloring from a Fox coloring is more complicated.

Let $C$ be a Fox $n$-coloring. Begin by defining $\phi^{-1}(C)(u)=0$ for the unbounded face $u\in F$. For any internal face $f\in F$, choose a path from $u$ to $f$  that intersects the diagram transversely and does not interact with crossings. Say the sequence of faces and arcs intersecting the path is given by $$u=f_1,a_1, f_2, a_2, f_3,a_3, \ldots, a_{m-1}, f_m=f.$$ The value $\phi^{-1}(C)(f)$ is inductively defined as one traverses this path. For $1<i\leq m$, set $\phi^{-1}(C)(f_i) = C(a_{i-1})-\phi^{-1}(C)(f_{i-1})$. Carter-Silver-Williams call this process integration along that path, and they prove that integration of Fox colorings is conservative in the sense that the result is independent of path  \cite{carter2014three}. An example of a corresponding pair of Fox and Dehn 5-colorings is shown in Figure \ref{corrcolors}.  

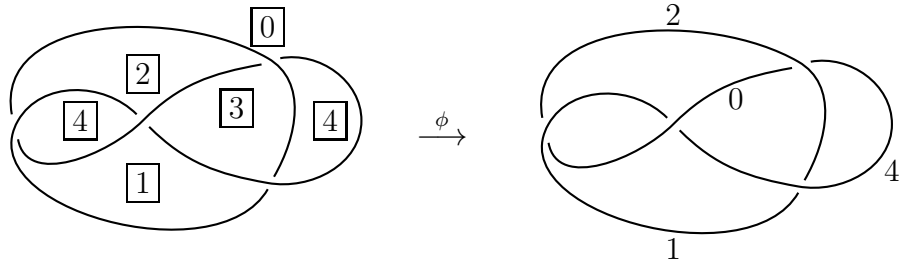
\begin{figure}[h]
\begin{subfigure}[c]{.3\textwidth}
\begin{tikzpicture}[scale=.83]

\node at (4,1.5) {\fbox{0}};
\node at (2, .75) {\fbox{2}};
\node at (3.5,.2) {\fbox{3}};
\node at (5, 0) {\fbox{4}};
\node at (1,0) {\fbox{4}};
\node at (2, -1) {\fbox{1}};

\draw [style=thick] (-.1,.1) to[out=100,in=150] (4,1) to[out=-30, in=60] (4.1,-.9);
\draw [style=thick] (4,-1.1) to[out=240,in=240](0,0) to[out=60, in=135] (1.9,.1);
\draw [style=thick] (2.1,-.1) to[out=-45, in=170] (4,-1) to[out=-10, in=270] (5.5,0) to[out=90, in=10] (4.2,1);
\draw [style=thick] (3.9,.9) to[out=190, in=45] (2,0) to[out=225, in=280] (0,-.3);
\end{tikzpicture}\end{subfigure}
\hspace{.35in}$\stackrel{\phi}{\longrightarrow}$\hspace{.1in}
\begin{subfigure}[c]{.3\textwidth}
\begin{tikzpicture}[scale=.83]

\node at (2,1.75) {2};
\node at (5.5, -.75) {4};
\node at (3,.4) {0};
\node at (2, -2) {1};

\draw [style=thick] (-.1,.1) to[out=100,in=150] (4,1) to[out=-30, in=60] (4.1,-.9);
\draw [style=thick] (4,-1.1) to[out=240,in=240](0,0) to[out=60, in=135] (1.9,.1);
\draw [style=thick] (2.1,-.1) to[out=-45, in=170] (4,-1) to[out=-10, in=270] (5.5,0) to[out=90, in=10] (4.2,1);
\draw [style=thick] (3.9,.9) to[out=190, in=45] (2,0) to[out=225, in=280] (0,-.3);
\end{tikzpicture}
\end{subfigure}
\caption{Corresponding Dehn and Fox colorings of a figure-eight knot diagram} \label{corrcolors}
\end{figure}

Every diagram has at least $n$ distinct $n$-colorings which we call trivial. The trivial Fox $n$-colorings are the $n$ constant maps. Recall every knot diagram has a unique checkerboard shading such that the outer face is white (i.e. unshaded). Trivial Dehn colorings assign the value 0 to every unshaded face and some fixed value to the remaining faces. One can easily verify that $\phi$ maps the trivial Dehn coloring with values $0$ and $l$ to the trivial Fox coloring that labels every strand with $l$. Trivial Fox and Dehn $n$-colorings form submodules of $\textup{Col}_n(D)$ and $\widetilde{\textup{Col}}_n(D)$, and the quotients of the $n$-coloring modules by trivial colorings are referred to as the based $n$-coloring modules. Summarizing, we have the following theorem from \cite{carter2014three}. 

\begin{theorem}[Carter-Silver-Williams]
The map $\phi$ is an isomorphism of the Fox and Dehn $n$-coloring modules and induces an isomorphism on the based Fox and Dehn $n$-coloring modules.
\end{theorem}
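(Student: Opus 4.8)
The plan is to verify directly from the explicit formulas for $\phi$ and $\phi^{-1}$ that these maps are mutually inverse $\mathbb{Z}$-module homomorphisms, and then to check that they carry trivial colorings to trivial colorings so that they descend to the based quotients. Throughout, all arithmetic is modulo $n$.

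\medskip

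First I would establish that $\phi$ is a well-defined $\mathbb{Z}$-module homomorphism from $\widetilde{\textup{Col}}_n(D)$ to $\textup{Col}_n(D)$. The definition $\phi(\widetilde{C})(a) = \widetilde{C}(f) + \widetilde{C}(f')$ depends only on the two faces adjacent to the arc $a$, so well-definedness amounts to confirming that the output satisfies the Fox relation $2\phi(\widetilde{C})(b) \equiv \phi(\widetilde{C})(a) + \phi(\widetilde{C})(c)$ at each crossing. This is a short local computation: expressing each of the three arc labels at a crossing as the sum of its two adjacent face labels and invoking the Dehn relation $a+b \equiv c+d$ from Figure \ref{coloringrels} causes the terms to collapse to the required identity. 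Linearity of $\phi$ is immediate since it is defined by a sum of evaluation maps, each of which respects vector addition modulo $n$.

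\medskip

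Next I would show $\phi^{-1}$ as defined is a genuine two-sided inverse. The construction of $\phi^{-1}(C)$ via integration along a path fixes $\phi^{-1}(C)(u)=0$ and uses the recursion $\phi^{-1}(C)(f_i) = C(a_{i-1}) - \phi^{-1}(C)(f_{i-1})$. I would take the path-independence of integration (proved by Carter-Silver-Williams, which I may assume) as giving well-definedness, and then check that $\phi^{-1}(C)$ actually satisfies the Dehn relations at every crossing, so that its image lands in $\widetilde{\textup{Col}}_n(D)$. To see that $\phi$ and $\phi^{-1}$ are mutually inverse, I would compose them: for an arc $a$ separating faces $f,f'$ that are consecutive along an integration path, the recursion gives $\phi^{-1}(C)(f') = C(a) - \phi^{-1}(C)(f)$, i.e. $\phi^{-1}(C)(f) + \phi^{-1}(C)(f') = C(a)$, which is exactly $\phi(\phi^{-1}(C))(a) = C(a)$; the reverse composition follows from the uniqueness of the integration recursion once the base value at $u$ is fixed. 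Linearity of $\phi^{-1}$ follows from the linearity of the recursion in $C$.

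\medskip

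Finally I would verify the statement about the based modules. This requires checking that $\phi$ maps the submodule of trivial Dehn colorings bijectively onto the submodule of trivial Fox colorings. As noted in the exposition, the trivial Dehn coloring assigning $0$ to unshaded faces and $l$ to shaded faces is sent by $\phi$ to the constant Fox coloring with value $l$, since each arc borders exactly one shaded and one unshaded face, so its label becomes $0+l = l$; conversely the recursion recovers the shaded/unshaded trivial Dehn coloring from a constant Fox coloring. Because $\phi$ restricts to an isomorphism of the trivial submodules, it induces a well-defined isomorphism on the quotients. I expect the main obstacle to be the bookkeeping in the local crossing computation for $\phi$ and $\phi^{-1}$: one must be careful to match the correct face labels $a,b,c,d$ to the arcs meeting at a crossing according to the conventions in Figure \ref{coloringrels}, and to track which faces are shaded, since a sign or labeling error there is the only place the argument can genuinely go wrong.
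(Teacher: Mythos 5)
The paper does not actually prove this statement---it is quoted from Carter--Silver--Williams and used as a black box---so there is no internal proof to measure you against; your direct verification is essentially correct and self-contained except at one point. The local computation in your first step is right: writing the two understrand labels as $a+c$ and $b+d$ and the overstrand label as $a+b$ (equivalently $c+d$), the Fox relation $2(a+b)\equiv(a+c)+(b+d)$ is exactly the Dehn relation $a+b\equiv c+d$; note that this same congruence is also what makes $\phi(\widetilde{C})$ well defined on an \emph{overstrand} arc, since the pair of faces adjacent to such an arc changes as it passes over a crossing, a subtlety your phrasing slightly glosses over. The compositions and the statement about trivial colorings are handled correctly (each arc borders one shaded and one unshaded face, so the $(0,l)$ trivial Dehn coloring goes to the constant Fox coloring $l$). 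The one place where you have outsourced the real content is path-independence of integration: that is the crux of the Carter--Silver--Williams theorem, and everything else in your argument (well-definedness of $\phi^{-1}$, the fact that $\phi^{-1}(C)$ satisfies the Dehn relation at a crossing because the faces $a,b$ and $c,d$ are separated by the \emph{same} overstrand arc, and the identity $\phi\circ\phi^{-1}=\mathrm{id}$) is downstream of it. If you wanted a fully self-contained proof you would add the short argument that integration around a small loop encircling one crossing returns the starting value precisely because $C(u_1)+C(u_2)-2C(o)\equiv 0$, and that any two transverse paths between the same faces differ by a sequence of such elementary loops and face-preserving isotopies; as written, assuming conservativity means you are assuming the hardest part, which is acceptable only because the paper itself cites it.
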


The fundamental group of the knot complement, typically denoted $\pi_1(K)$, has two well-known presentations: the Wirtinger and Dehn presentations.  We will only need a few properties of these presentations in our exposition which we recall here. For more extensive information on Wirtinger and Dehn presentations, see for instance \cite{rolfsen1976knots}. 

Both Wirtinger and Dehn presentations are given in terms of generators and relations based on a diagram $D$.  The Wirtinger presentation has generators in one-to-one correspondence with the set $A$ with one relation for each crossing. In the Dehn presentation, the generating set corresponds to the set $F$ (the unbounded face gives a trivial element) with one relation for each crossing. In the following discussion, we conflate the notions of arcs and faces and their corresponding generators.

Finally, the following fact is needed in the proof of Theorem \ref{mapcomm}. If some path from the unbounded face $u$ to a face $f$ passes through the following sequence of arcs and faces:
$u=f_1,a_1, f_2, a_2, f_3,a_3, \ldots, a_{m-1}, f_m=f$, then in $\pi_1(K)$ we have $f = a_1^{\pm1}a_2^{\pm1}\cdots a_{m-1}^{\pm1}$. The $\pm$ signs come from the fact that the Wirtinger presentation depends on an orientation of $D$ while the Dehn presentation does not.

 Let $D_{2n} = \{\alpha, s: \alpha^n = 1=s^2, s\alpha s = \alpha^{-1}\}$ be the standard presentation for the dihedral group with $2n$ elements. Corresponding to a Fox $n$-coloring $C$, there is a homomorphism $\rho_C: \pi_1(K) \rightarrow D_{2n}$ given by $\rho_C(a) = s\alpha^{C(a)}$ where $a\in A$ is a Wirtinger generator of $\pi_1(K)$. We now get the following theorem due to Fox (c.f. \cite{przytycki19983} for a proof).

\begin{theorem}[Fox]\label{reps}
The relation $C\leftrightarrow \rho_C$ is a bijection between the set of all Fox $n$-colorings and the set of maps from $\pi_1(K)$ into $D_{2n}$ sendng Wirtinger generators to reflections.  
\end{theorem}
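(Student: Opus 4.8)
The plan is to prove Fox's theorem by verifying three things in sequence: that $\rho_C$ is a well-defined homomorphism for each Fox coloring $C$, that the assignment $C \mapsto \rho_C$ is injective, and that every homomorphism $\pi_1(K) \to D_{2n}$ sending Wirtinger generators to reflections arises this way (surjectivity). First I would unpack the structure of reflections in $D_{2n}$: every element of the form $s\alpha^k$ is an involution, and these are exactly the reflections, so a map sending generators to elements $s\alpha^{C(a)}$ automatically sends each Wirtinger generator to a reflection. The crux of well-definedness is checking that $\rho_C$ respects the Wirtinger relations.

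To verify the relations, I would recall that each Wirtinger relation at a crossing has the form $c = w a w^{-1}$, where $a$ is the overstrand and $c, b$ are the two understrand arcs, with $w = b$ (the conjugating generator). Applying $\rho_C$ and using $\rho_C(a) = s\alpha^{C(a)}$, the key computation is
\[
\rho_C(b)\,\rho_C(a)\,\rho_C(b)^{-1} = s\alpha^{C(b)} \cdot s\alpha^{C(a)} \cdot \alpha^{-C(b)}s,
\]
and simplifying this using the dihedral relations $s\alpha s = \alpha^{-1}$ and $s^2 = 1$ should collapse it to $s\alpha^{2C(b) - C(a)}$. This equals $\rho_C(c) = s\alpha^{C(c)}$ precisely when $2C(b) - C(a) \equiv C(c)$, i.e. when $2C(b) \equiv C(a) + C(c) \pmod n$, which is exactly the Fox coloring congruence from Figure \ref{coloringrels}. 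Thus $\rho_C$ descends to a homomorphism if and only if $C$ satisfies the coloring relations. I must be careful with orientation conventions governing which generator conjugates which, but any consistent choice yields the same congruence.

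For injectivity, I would observe that $C$ can be recovered from $\rho_C$: since $\rho_C(a) = s\alpha^{C(a)}$ and the exponent of $\alpha$ in a reflection $s\alpha^k$ determines $k \in \mathbb{Z}/n$ uniquely, distinct colorings produce distinct homomorphisms. For surjectivity, given any homomorphism $\rho$ sending each Wirtinger generator $a$ to a reflection, write $\rho(a) = s\alpha^{k_a}$ and define $C(a) = k_a$; the argument above run in reverse shows that $\rho$ respecting the Wirtinger relations forces $C$ to satisfy the Fox congruences, so $C \in \textup{Col}_n(D)$ and $\rho = \rho_C$.

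The main obstacle I anticipate is bookkeeping the orientation-dependent $\pm 1$ exponents in the Wirtinger relations and confirming that the reflection computation closes up correctly regardless of crossing sign; this is where a sign error would silently break the equivalence. The remaining steps are essentially formal once the single crossing computation is pinned down, since well-definedness reduces to checking one relation per crossing and both injectivity and surjectivity follow from the explicit exponent-extraction described above.
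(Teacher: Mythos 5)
The paper does not actually prove this theorem; it states it as a classical result of Fox and defers to the reference \cite{przytycki19983}. So your proposal is being measured against the standard argument rather than anything in the text. Your overall strategy (well-definedness via the crossing relation, injectivity by reading off the exponent of $\alpha$, surjectivity by defining $C(a)$ from $\rho(a)=s\alpha^{k_a}$) is exactly the standard proof, and the dihedral computation $s\alpha^{j}\cdot s\alpha^{k}\cdot(s\alpha^{j})^{-1}=s\alpha^{2j-k}$ at its core is correct.

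There is, however, one concrete error in the setup, and it is precisely the sign/labelling issue you flagged as a risk. You write the Wirtinger relation as $c = b a b^{-1}$ with $a$ the \emph{overstrand} and $b$ an understrand acting as the conjugator. In the Wirtinger presentation the conjugating generator is the meridian of the \emph{overstrand}: the relation is $c = a^{\pm 1} b\, a^{\mp 1}$ with $a$ the overstrand and $b,c$ the two understrand arcs. With your labelling, your computation yields $2C(b)\equiv C(a)+C(c)$, i.e.\ twice an \emph{understrand} label equals the sum of the other two, which is not the Fox congruence of Figure \ref{coloringrels} ($2a\equiv b+c$ with $a$ the overstrand); your claim that it ``is exactly the Fox coloring congruence'' is false as written. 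It is also not true that ``any consistent choice yields the same congruence'' --- conjugating by an understrand gives a genuinely different relation. The fix is trivial (make the overstrand the conjugator, so the computation gives $2C(a)\equiv C(b)+C(c)$, and note that the $\pm1$ on the conjugator is irrelevant because $\rho_C(a)$ is an involution), after which the rest of your argument, including injectivity and surjectivity, goes through as described.
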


Let $\widetilde{C}$ be an $n$-Dehn coloring. Again, note that the checkerboard shading of $D$ where the unbounded region is unshaded is unique. Define the map $\widetilde{\rho}_{\widetilde{C}}: \pi_1(K) \rightarrow D_{2n}$ as follows where $f\in F$ is a Dehn generator of $\pi_1(K)$. $$\widetilde{\rho}_{\widetilde{C}}(f) = \begin{cases} s\alpha^{\widetilde{C}(f)} &\mbox{if $f$ is a shaded face and} \\ 
\alpha^{\widetilde{C}(f)} & \mbox{if $f$ is an unshaded face} \end{cases}$$  This definition of $\widetilde{\rho}_{\widetilde{C}}$ agrees with the map $\phi$ as shown in the following theorem.

\begin{theorem} \label{mapcomm}
The map $\widetilde{\rho}_{\widetilde{C}}$ is a homomorphism. Moreover, $\widetilde{\rho}_{\widetilde{C}} = \rho_{\phi(\widetilde{C})}$.
\end{theorem}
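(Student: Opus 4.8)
The plan is to deduce both assertions at once from Fox's theorem together with the path fact recorded just above the statement. Since $\phi(\widetilde{C})$ is a Fox $n$-coloring, Theorem~\ref{reps} already provides a genuine homomorphism $\rho_{\phi(\widetilde{C})}\colon\pi_1(K)\to D_{2n}$ with $\rho_{\phi(\widetilde{C})}(a)=s\alpha^{\phi(\widetilde{C})(a)}$ on each Wirtinger generator $a\in A$. The faces of $D$ also generate $\pi_1(K)$, as Dehn generators, so it suffices to check that the formula defining $\widetilde{\rho}_{\widetilde{C}}$ agrees with $\rho_{\phi(\widetilde{C})}$ on every face $f\in F$: a map defined on a generating set that coincides there with an honest homomorphism automatically respects all relations. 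Hence establishing $\widetilde{\rho}_{\widetilde{C}}(f)=\rho_{\phi(\widetilde{C})}(f)$ for all $f$ simultaneously shows both that $\widetilde{\rho}_{\widetilde{C}}$ extends to a homomorphism and that it equals $\rho_{\phi(\widetilde{C})}$.

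To compute $\rho_{\phi(\widetilde{C})}(f)$ I would fix a transverse path $u=f_1,a_1,f_2,\dots,a_{m-1},f_m=f$ from the unbounded face to $f$ and invoke the stated fact $f=a_1^{\pm1}\cdots a_{m-1}^{\pm1}$ in $\pi_1(K)$. Applying the homomorphism and using that every reflection $s\alpha^k$ is an involution, so the $\pm1$ exponents are irrelevant, one gets $\rho_{\phi(\widetilde{C})}(f)=\prod_{i=1}^{m-1}s\alpha^{\phi(\widetilde{C})(a_i)}$. Because $a_i$ separates the faces $f_i$ and $f_{i+1}$, the definition of $\phi$ gives $\phi(\widetilde{C})(a_i)=\widetilde{C}(f_i)+\widetilde{C}(f_{i+1})$. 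The remaining step is a telescoping computation: repeatedly using $\alpha^{j}s=s\alpha^{-j}$ and $(s\alpha^{j})(s\alpha^{k})=\alpha^{k-j}$, the product $\prod_{i=1}^{m-1}s\alpha^{\widetilde{C}(f_i)+\widetilde{C}(f_{i+1})}$ collapses, since all interior face labels cancel in the resulting alternating sum and only the endpoints survive, leaving the $\alpha$-exponent $(-1)^{m}\widetilde{C}(f_1)+\widetilde{C}(f_m)$. As $\widetilde{C}(f_1)=\widetilde{C}(u)=0$, the exponent is simply $\widetilde{C}(f)$, while the parity of the number $m-1$ of reflections in the product dictates whether the result is the rotation $\alpha^{\widetilde{C}(f)}$ or the reflection $s\alpha^{\widetilde{C}(f)}$.

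Finally I would match the parity with the shading. Starting from the unshaded face $u$ and crossing one arc at a time toggles the checkerboard color, so $f$ is shaded exactly when $m-1$ is odd, which is exactly when the telescoped product is a reflection. Thus the two cases land precisely on $\widetilde{\rho}_{\widetilde{C}}(f)=s\alpha^{\widetilde{C}(f)}$ for shaded $f$ and $\widetilde{\rho}_{\widetilde{C}}(f)=\alpha^{\widetilde{C}(f)}$ for unshaded $f$, which is exactly the definition of $\widetilde{\rho}_{\widetilde{C}}$. This settles both claims.

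I expect the main obstacle to be the sign bookkeeping in the telescoping product together with the correct alignment of the reflection/rotation dichotomy with the shading parity. The conceptual reassurance is that the output $\widetilde{C}(f)$ is manifestly independent of the chosen path, consistent with the conservativity of integration proved by Carter--Silver--Williams, precisely because $f$ is a fixed element of $\pi_1(K)$ and $\rho_{\phi(\widetilde{C})}$ is already known to be well defined; this is what lets me sidestep checking the Dehn relations at each crossing by hand.
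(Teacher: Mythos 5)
Your proposal is correct and follows essentially the same route as the paper: both arguments verify $\widetilde{\rho}_{\widetilde{C}}(f)=\rho_{\phi(\widetilde{C})}(f)$ on each Dehn generator by writing $f=a_1^{\pm1}\cdots a_{m-1}^{\pm1}$ along a transverse path, using that reflections are involutions to discard the signs, collapsing the product in $D_{2n}$ to $s^{m-1}\alpha^{\widetilde{C}(f)}$, and matching the parity of the number of reflections to the checkerboard shading. The only cosmetic difference is that you telescope using the forward formula $\phi(\widetilde{C})(a_i)=\widetilde{C}(f_i)+\widetilde{C}(f_{i+1})$ whereas the paper unrolls the integration formula $\widetilde{C}(f_i)=C(a_{i-1})-\widetilde{C}(f_{i-1})$; these are the same identity read in opposite directions.
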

\begin{proof}
Let $\widetilde{C}$ be a Dehn $n$-coloring with $\phi(\widetilde{C})=C$, and let $f\in F$ be a Dehn generator of $\pi_1(K)$. Say $u=f_1,a_1, f_2, a_2, f_3,a_3, \ldots, a_{m-1}, f_m=f$ is the ordered list of faces and arcs along some path from the unbounded face $u$ to $f$. Then $f$ can be written in terms of Wirtinger generators as $f = a_1^{\pm1}a_2^{\pm1}\cdots a_{m-1}^{\pm1}$. By definition of $\phi$ we know $\widetilde{C}(f) = C(a_{m-1})-\widetilde{C}(f_{m-1}) = \pm C(a_1) \mp C(a_2) \pm \cdots + C(a_{m-1})$. 

In the dihedral group, reflections are their own inverses, so $(s\alpha^j)^{-1} = s\alpha^j$ for all $j$. It follows that 
\begin{eqnarray*}
\rho_C(f) &=& \rho_C(a_1^{\pm1}a_2^{\pm1}\cdots a_{m-1}^{\pm1}) \\
&=& \rho_C(a_1)\rho_C(a_2) \cdots \rho_C(a_{m-1})\\
&=& s\alpha^{C(a_1)}s\alpha^{C(a_2)} \cdots s\alpha^{C(a_{m-1})}\\
&=& s^k\alpha^{\pm C(a_1) \mp C(a_2) \pm \cdots + C(a_{m-1})}\\
&=&s^k\alpha^{\widetilde{C}(f)}.
\end{eqnarray*}

From this calculation, we see that $\rho_C(f) = s\alpha^{\widetilde{C}(f)}$ or $\rho_C(f) = \alpha^{\widetilde{C}(f)}$ depending on the parity of $k$. While $k$ may depend on the choice of path from $u$ to $f$, the parity of $k$ does not. Furthermore, we see that the parity of $k$ encodes checkerboard shading information. Indeed, if $k$ is odd then $f$ is a shaded face, and if $k$ is even then $f$ is unshaded. We conclude that $\rho_C(f) = \widetilde{\rho}_{\widetilde{C}}(f)$.
\end{proof}

\section{Invariants obtained from coloring data}
Given a diagram $D$, let $\textup{Col}(D)$ be the the free abelian group generated by the set of arcs modulo the Fox coloring relations taken over $\mathbb{Z}$ rather than $\mathbb{Z}/n$. Similarly, define $\widetilde{\textup{Col}}(D)$ to be the free abelian group generated by the set of faces of modulo the Dehn coloring relations taken over $\mathbb{Z}$.  We call $\textup{Col}(D)$ and $\widetilde{\textup{Col}}(D)$  respectively the Fox and Dehn coloring modules. The aim of this section is to study these modules. As with $n$-coloring modules, one can show that the Fox and Dehn coloring modules are isomorphic via the map $\phi$.

\begin{lemma} \label{coloringgroups}
By computing sums over $\mathbb{Z}$ rather than $\mathbb{Z}/n$, the map $\phi$ induces a $\mathbb{Z}$-module isomorphism of $\textup{Col}(D)$ and $\widetilde{\textup{Col}}(D)$. 
\end{lemma}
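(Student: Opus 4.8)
The plan is to reuse the very formulas defining $\phi$ and $\phi^{-1}$ in Section 2, now interpreted over $\mathbb{Z}$. The key observation is that neither formula involves reduction modulo $n$: the rule $\phi(\widetilde{C})(a)=\widetilde{C}(f)+\widetilde{C}(f')$ and the integration rule $\phi^{-1}(C)(f_i)=C(a_{i-1})-\phi^{-1}(C)(f_{i-1})$ use only addition and subtraction, and every coefficient appearing in them is $\pm 1$. Consequently the same prescriptions define $\mathbb{Z}$-linear maps between the free abelian groups on $A$ and on $F$, and the task reduces to checking that they descend to the quotients $\textup{Col}(D)$ and $\widetilde{\textup{Col}}(D)$ and remain mutually inverse over $\mathbb{Z}$.

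First I would check that $\phi$ is compatible with the defining relations. At a single crossing one verifies that the Dehn relation $a+b=c+d$ (now an equality in $\mathbb{Z}$) forces the Fox relation $2a=b+c$ on the labels produced by $\phi$. This is precisely the local computation Carter--Silver--Williams perform modulo $n$; since it is an identity among integer sums of labels, it holds verbatim over $\mathbb{Z}$, so $\phi$ intertwines the two families of relations and induces a well-defined $\mathbb{Z}$-module homomorphism $\widetilde{\textup{Col}}(D)\to\textup{Col}(D)$.

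The subtler point is the well-definedness of $\phi^{-1}$, namely that integration of a Fox coloring is independent of the chosen path from the unbounded face $u$ to a face $f$. Any two such paths differ by a closed loop, and in the plane every such loop decomposes into elementary loops encircling single crossings. I would show that the net signed sum of arc labels accumulated around each elementary loop vanishes by invoking the Fox relation at the enclosed crossing; since that relation is now an equality in $\textup{Col}(D)$, the cancellation is an integer identity rather than a mere congruence, and conservativity of integration survives the passage from $\mathbb{Z}/n$ to $\mathbb{Z}$. This is the step I expect to be the main obstacle, because it is where the mod-$n$ argument of Carter--Silver--Williams must be re-examined to confirm that nothing in it genuinely relied on the finiteness of $\mathbb{Z}/n$ or on reducing modulo $n$.

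Finally, with both maps well defined over $\mathbb{Z}$, the relations $\phi\circ\phi^{-1}=\mathrm{id}$ and $\phi^{-1}\circ\phi=\mathrm{id}$ follow from the same computations as in the $n$-coloring case, each being a finite alternating sum of integer labels; and $\mathbb{Z}$-linearity is immediate from the additive form of the defining formulas. Assembling these facts yields the desired $\mathbb{Z}$-module isomorphism $\widetilde{\textup{Col}}(D)\cong\textup{Col}(D)$.
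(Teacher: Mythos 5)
The paper actually states Lemma \ref{coloringgroups} without proof, remarking only that ``as with $n$-coloring modules, one can show'' the isomorphism, so there is no written argument to compare against; your proposal supplies exactly the proof the authors intend, namely re-running the Carter--Silver--Williams construction with integer coefficients, and the two verifications you isolate (the crossing-relation check for $\phi$ and path-independence of integration via elementary loops around single crossings) are precisely the points that must be checked. One thing you should make explicit: $\textup{Col}(D)$ and $\widetilde{\textup{Col}}(D)$ are defined as quotients of the free abelian groups on arcs and on bounded faces, not as groups of $\mathbb{Z}$-valued colorings --- the latter group is just $\mathbb{Z}$ for a typical knot, since the torsion of the coloring module admits no nonzero homomorphism to $\mathbb{Z}$, so a literal reading of ``the same prescriptions over $\mathbb{Z}$'' would land on the wrong objects. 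The fix is only notational: take the transposed maps on generators, $a\mapsto f+f'$ (the two faces adjacent to $a$) and $f\mapsto a_{m-1}-a_{m-2}+\cdots\pm a_1$ along a path from the unbounded face. With that reading your crossing computation literally shows that the Fox relator $2a-b-c$ is sent to the Dehn relator at that crossing (and conversely), your loop argument shows the second map is independent of the chosen path because the signed sum of arcs around a crossing is the Fox relator, and the telescoping identities give that the two maps are mutually inverse. So the approach is correct and complete once this dualization is stated; I would add that one sentence rather than leave the reader to infer it.
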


Coloring modules encode a wealth of information. Indeed, combining Lemma \ref{coloringgroups} with results about Fox coloring we get the following theorem. (These results are due to Fox, but one can find proofs compatible with our discussion in \cite{przytycki19983}.)

\begin{theorem} \label{coloringfacts}
Let $D$  be a diagram of a knot $K$. Then we have the following.
\begin{enumerate}
\item{Coloring modules are invariant under Reidemeister moves, so we can write them as $\textup{Col}(K) \cong \widetilde{\textup{Col}}(K)$.}
\item{The coloring module reduced modulo $n$ is the $n$-coloring module. In other words, $\textup{Col}(K) \otimes  \mathbb{Z}/n \cong\textup{Col}_n(K) \cong \widetilde{\textup{Col}}_n(K) \cong \widetilde{\textup{Col}}(K) \otimes \mathbb{Z}/n$.}
\item{Coloring modules encode important topological information. In particular, $\textup{Col}(K) \cong \widetilde{\textup{Col}}(K) \cong H_1((M_D)^{(2)}, \mathbb{Z}) \oplus \mathbb{Z}$ where $H_1((M_D)^{(2)}, \mathbb{Z})$ is the first homology group of the cyclic branched double cover of $S^3$ branched along $K$.}
\item{Coloring modules have rank 1. In other words, $H_1((M_D)^{(2)}, \mathbb{Z})$ is torsion.}
\item The order of $H_1((M_D)^{(2)}, \mathbb{Z})$ is the absolute value of the Alexander polynomial at -1.
\end{enumerate}
\end{theorem}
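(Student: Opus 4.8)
The plan is to realize the integral coloring module as the cokernel of an explicit presentation matrix and to reduce the five assertions to standard properties of that matrix. Using the isomorphism $\phi$ of Lemma~\ref{coloringgroups}, I may work on whichever side, Fox or Dehn, is convenient for each part. Writing $\textup{Col}(D) = \mathbb{Z}^A/R$, where $R$ is the submodule generated by the crossing relations $2a-b-c$, I first note that this presentation matrix is exactly the Wirtinger--Alexander matrix of $D$ specialized at $t=-1$: the Fox derivative of a Wirtinger relation $c = aba^{-1}$ abelianizes to a row with entries $1-t$, $t$, and $-1$ in the columns indexed by $a$, $b$, and $c$, and setting $t=-1$ returns the coloring relation $2a-b-c$. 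This identification is the bridge between the combinatorics and the topology.

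For part (1) I would verify invariance directly: each Reidemeister move alters the arcs (or faces) and crossing relations in a controlled way, and one checks that the associated presentation matrices differ only by integral row and column operations together with stabilization by a trivial relation, none of which change the cokernel. Alternatively, invariance of the integral module follows from invariance of every $\textup{Col}_n$ together with the structure theorem for finitely generated abelian groups, since the isomorphism types of $G\otimes\mathbb{Z}/n$ for all $n$ determine a finitely generated abelian group $G$. Part (2) is then formal: $\textup{Col}(D)$ is presented as a cokernel and $-\otimes\mathbb{Z}/n$ is right exact, so reducing the presentation matrix modulo $n$ presents $\textup{Col}(D)\otimes\mathbb{Z}/n$, which is manifestly $\textup{Col}_n(D)$; the Dehn versions follow by applying $\phi$.

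The heart of the theorem is part (3), and here the Dehn picture is most transparent. After setting the unbounded face to $0$ and fixing the checkerboard shading, the Dehn relations on the bounded faces assemble into a matrix that I claim agrees, up to sign conventions, with the Goeritz matrix of the shaded checkerboard surface. Since the Goeritz matrix is a standard presentation of $H_1((M_D)^{(2)};\mathbb{Z})$---equivalently $H_1((M_D)^{(2)};\mathbb{Z})\cong\operatorname{coker}(V+V^{\mathsf T})$ for a Seifert matrix $V$---the based Dehn coloring module is isomorphic to $H_1((M_D)^{(2)};\mathbb{Z})$. The full module gains one extra free generator from the rank-one submodule of trivial colorings, which is primitive (the constant coloring with value $1$ is not a proper multiple of any coloring) and hence splits off as the summand $\oplus\,\mathbb{Z}$. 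The main obstacle is exactly this identification: matching the combinatorial face relations with the topological Goeritz form and tracking the unbounded face and the trivial colorings so that the $\mathbb{Z}$ splits off cleanly.

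Parts (4) and (5) then reduce to linear algebra over $\mathbb{Z}$. The double branched cover of $S^3$ along a knot is a closed orientable rational homology sphere, so $H_1((M_D)^{(2)};\mathbb{Z})$ is finite; the based module is therefore torsion and $\textup{Col}(K)$ has rank exactly $1$, giving part (4). For part (5), the order of $\operatorname{coker}(V+V^{\mathsf T})$ equals $|\det(V+V^{\mathsf T})|$ when this determinant is nonzero, and with the normalization $\Delta_K(t)=\det(V-tV^{\mathsf T})$ for the Alexander polynomial one has $\Delta_K(-1)=\det(V+V^{\mathsf T})$; hence the order is $|\Delta_K(-1)|$. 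The nonvanishing of this determinant (the knot determinant is odd, hence nonzero) independently re-proves finiteness. The only remaining subtlety is the consistency of normalizations among the coloring matrix at $t=-1$, the Goeritz matrix, and the symmetrized Seifert form, but all three have the same absolute determinant, so the numerical statement is unambiguous.
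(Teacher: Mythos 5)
You should first note that the paper does not prove Theorem \ref{coloringfacts} at all: it attributes the statements to Fox and points the reader to \cite{przytycki19983} for proofs compatible with its setup. So any argument you give is necessarily ``a different route,'' and your outline is in fact the standard one from the literature: identify the coloring relations with the Alexander matrix at $t=-1$, get (1) and (2) from row/column operations and right-exactness of $-\otimes\mathbb{Z}/n$, get (3) from a known presentation of $H_1((M_D)^{(2)};\mathbb{Z})$, and get (4) and (5) from $\Delta_K(-1)=\det(V+V^{\mathsf T})$ being odd. Parts (1), (2), (4), (5) are essentially complete as you sketch them (your observation that knowing $G\otimes\mathbb{Z}/n$ for all $n$ determines a finitely generated $G$ is a legitimate, if slightly roundabout, way to get (1) once invariance of each $\textup{Col}_n$ is granted).

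The genuine gap is at the crux of part (3), and it is not merely a deferred computation: as literally stated, your claim that the Dehn face relations ``assemble into a matrix that agrees, up to sign conventions, with the Goeritz matrix'' cannot be right, because the dimensions do not match. The Dehn relation matrix $\widetilde{M}(D)$ has one row per crossing and one column per bounded face of \emph{both} checkerboard colors (it is $k\times k$ for a $k$-crossing diagram), whereas the Goeritz matrix is a symmetric matrix indexed by the shaded faces only (size roughly $k/2$ for the figure-eight example in the paper: $4\times4$ versus $2\times2$). The two matrices present isomorphic modules, but establishing that is exactly the content you are trying to prove; the honest version of this step requires eliminating the generators corresponding to unshaded faces from the Dehn presentation (using the relations to solve for them) and checking that what remains is the Goeritz form --- and that elimination is where all the sign bookkeeping lives. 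Alternatively, you already set up the cleaner classical route in your first paragraph: the Fox coloring matrix is the Wirtinger--Alexander matrix at $t=-1$, and Fox's free-calculus computation of $H_1$ of the cyclic branched double cover shows directly that deleting one row and one column of that matrix presents $H_1((M_D)^{(2)};\mathbb{Z})$; transporting through $\phi$ (Lemma \ref{coloringgroups}) then gives the Dehn statement without ever invoking the Goeritz matrix. Your primitivity argument for splitting off the $\mathbb{Z}$ summand should likewise be tied to the paper's Theorem \ref{Foxsub} (removing one generator presents the torsion part), since ``the constant coloring is not a proper multiple of any coloring'' does not by itself imply it generates $\textup{Col}(K)$ modulo torsion.
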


The theorem above demonstrates that the torsion part $H_1((M_D)^{(2)}, \mathbb{Z})$ of the coloring module, denoted here by $T(\textup{Col}(K)) \cong T(\widetilde{\textup{Col}}(K))$ to emphasize the coloring connection, is of primary interest since it determines for which $n$ a knot is $n$-colorable. The order of $T(\textup{Col}(K))$ is known as the determinant of the knot and is denoted by $\textup{det}(K)$. The following result about $n$-colorability, due to Fox, is implied by part (2) of Theorem \ref{coloringfacts}. 

\begin{corollary}[Fox]
A knot is $n$-colorable if and only if $\textup{gcd}(\textup{det}(K), n) \neq 1$.
\end{corollary}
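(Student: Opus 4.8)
The plan is to deduce the Fox corollary directly from part (2) of Theorem \ref{coloringfacts}, combined with the structure theorem for finitely generated modules over $\mathbb{Z}$. First I would invoke part (3) to write $\textup{Col}(K) \cong T(\textup{Col}(K)) \oplus \mathbb{Z}$, where by part (4) the summand $T(\textup{Col}(K)) \cong H_1((M_D)^{(2)}, \mathbb{Z})$ is a finite abelian group. By the structure theorem, this torsion part decomposes as a direct sum of cyclic groups $\bigoplus_i \mathbb{Z}/d_i$ with $d_1 \mid d_2 \mid \cdots \mid d_k$, and $\textup{det}(K) = \prod_i d_i$ by definition.

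Next I would unpack what $n$-colorability means. A knot admits a nontrivial $n$-coloring precisely when $\textup{Col}_n(K)$ is strictly larger than the submodule of trivial colorings; since every diagram has exactly $n$ trivial colorings coming from the constant maps, the knot is $n$-colorable exactly when $|\textup{Col}_n(K)| > n$. Using part (2) to rewrite $\textup{Col}_n(K) \cong \textup{Col}(K) \otimes \mathbb{Z}/n$, and applying the decomposition above, I compute
\begin{equation*}
\textup{Col}(K) \otimes \mathbb{Z}/n \;\cong\; (\mathbb{Z}/n) \oplus \bigoplus_i \mathbb{Z}/\gcd(d_i, n),
\end{equation*}
since $\mathbb{Z}/d_i \otimes \mathbb{Z}/n \cong \mathbb{Z}/\gcd(d_i,n)$ and the free summand $\mathbb{Z}$ contributes one copy of $\mathbb{Z}/n$. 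Taking orders gives $|\textup{Col}_n(K)| = n \cdot \prod_i \gcd(d_i, n)$.

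The final step is a clean equivalence of conditions. The knot is $n$-colorable iff $\prod_i \gcd(d_i, n) > 1$, that is, iff $\gcd(d_i, n) > 1$ for some $i$. Because the $d_i$ form a divisibility chain, $\gcd(d_i, n) > 1$ for some $i$ is equivalent to $\gcd(d_k, n) > 1$ where $d_k$ is the largest invariant factor, and one checks this is in turn equivalent to $\gcd\!\left(\prod_i d_i, n\right) = \gcd(\textup{det}(K), n) > 1$, since a prime divides $\prod_i d_i$ and $n$ simultaneously iff it divides some $d_i$ and $n$. Negating, the knot fails to be $n$-colorable iff $\gcd(\textup{det}(K), n) = 1$, which is the claimed statement.

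I expect the main obstacle to be bookkeeping rather than any deep idea: one must be careful that "$n$-colorable" refers to the existence of a \emph{nontrivial} coloring, so the free $\mathbb{Z}$ summand (accounting for the $n$ trivial colorings) must be separated out before reading off the condition. The one genuine content point, that $\gcd(\textup{det}(K), n) > 1$ is equivalent to some individual invariant factor sharing a common factor with $n$, follows immediately from the divisibility chain $d_1 \mid \cdots \mid d_k$, since then $\textup{det}(K) = \prod_i d_i$ and $d_k$ have exactly the same prime divisors.
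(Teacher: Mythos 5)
Your proof is correct, and it is essentially the route the paper has in mind: the paper gives no proof of this corollary, merely asserting that it ``is implied by part (2) of Theorem 3.2,'' and your argument is the standard fleshing-out of that remark, combining parts (2)--(4) with the invariant-factor decomposition to get $|\textup{Col}_n(K)| = n\prod_i \gcd(d_i,n)$ and then reading off the equivalence. The only care point, which you handle, is that ``$n$-colorable'' means admitting a \emph{nontrivial} coloring, so the free $\mathbb{Z}$ summand contributing the $n$ trivial colorings must be split off first.
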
 

In Section 5, we give a method for computing $\textup{det}(K)$ graph theoretically. To count the number of $n$-colorings of a knot, one needs to look not only at the quantity $\textup{det}(K)$ but at the entire structure of $T(\textup{Col}(K))$. We can determine this structure using relations matrices for the Fox and Dehn coloring modules. 

Given a reduced $k$-crossing diagram $D$ for a knot $K$, a relations matrix $M'(D)$ for $\textup{Col}(K)$ has one row for each crossing and one column for each arc. Similarly, a relations matrix $\widetilde{M}'(D)$ for $\widetilde{\textup{Col}}(K)$ has one row for each crossing and one column for each bounded face. By Euler characteristic arguments, we see that $M'(D)$ is a $k \times k$ square matrix while $\widetilde{M}'(D)$ has dimensions $k\times (k+1)$. The precise matrices obtained depend on an ordering of the crossings, arcs, and faces of the diagram. Figure \ref{knotandmatrices} shows an example.

\begin{figure}[h]
\raisebox{35pt}{$ \left( \begin{array}{cccc}
-1 & 0 & -1 & 2 \\
0 & -1 & 2 & -1 \\
2 & -1 & -1 & 0 \\
-1 & 2 & 0 & -1 \end{array} \right)$}
\hspace{.1in}\large{\raisebox{40pt}{$\stackrel{M'(D)}{\longleftarrow}$}}
\begin{tikzpicture}[scale=.65]

\node at (2,1.75) {\small{$a_1$}};
\node at (5.6, -.75) {\small{$a_2$}};
\node at (3.2,.4) {\small{$a_3$}};
\node at (2, -2) {\small{$a_4$}};

\node at (2, .75) {\small{$f_1$}};
\node at (3.5,-.2) {\small{$f_3$}};
\node at (5, 0) {\small{$f_2$}};
\node at (1,0) {\small{$f_5$}};
\node at (2, -1) {\small{$f_4$}};

\draw [style=thick] (-.1,.1) to[out=100,in=150] (4,1) to[out=-30, in=60] (4.1,-.9);
\draw [style=thick] (4,-1.1) to[out=240,in=240](0,0) to[out=60, in=135] (1.9,.1);
\draw [style=thick] (2.1,-.1) to[out=-45, in=170] (4,-1) to[out=-10, in=270] (5.5,0) to[out=90, in=10] (4.2,1);
\draw [style=thick] (3.9,.9) to[out=190, in=45] (2,0) to[out=225, in=280] (0,-.3);
\end{tikzpicture}
\large{\raisebox{40pt}{$\stackrel{\widetilde{M}'(D)}{\longrightarrow}$}}\hspace{.1in}
\scalebox{.8}{\raisebox{50pt}{$ \left( \begin{array}{ccccc}
1 & 0 & 0 & -1 & 1 \\
-1 & 0 & -1 & 1 & 1 \\
-1 & 1 & 1 & 0 & 0 \\
0 & -1 & 1 & 1 & 0 \end{array} \right)$}}

\caption{Presentation matrices for the Fox and Dehn coloring modules of the figure-eight knot} \label{knotandmatrices}
\end{figure}

Since we are most interested in $T(\textup{Col}(K)) \cong T(\widetilde{\textup{Col}}(K))$, it is useful to find appropriate submatrices of $M'(D)$ and $\widetilde{M}'(D)$ that describe only these submodules. For Fox coloring, the following result explains how to obtain this submatrix. For a proof, see \cite{livingston1993knot}.

\begin{theorem}\label{Foxsub}
Removing any one generator and one relation from the presentation of $\textup{Col}(K)$ does not impact the group structure. Furthermore, removing any one generator gives a presentation for $T(\textup{Col}(K))$.
\end{theorem}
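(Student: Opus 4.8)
The plan is to separate the statement into two claims matching its two sentences: first, that deleting a single generator (column) from $M'(D)$ already yields a presentation of $T(\textup{Col}(K))$; and second, that once a generator has been removed one may additionally delete a single relation (row) without changing the presented group. I would treat the second sentence first, since it supplies the group that the first sentence then preserves.

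For the generator-removal claim I would exploit an augmentation. Every Fox relation $2a-b-c$ has coefficient sum $2-1-1=0$, so the map $\mathbb{Z}^A\to\mathbb{Z}$ sending each arc to $1$ kills all relations and descends to a surjection $\bar\epsilon\colon \textup{Col}(K)\to\mathbb{Z}$ with $\bar\epsilon(\bar a)=1$ for every arc $a$. Since $\mathbb{Z}$ is free this surjection splits, so $\textup{Col}(K)\cong\ker\bar\epsilon\oplus\mathbb{Z}$; comparing with the decomposition $\textup{Col}(K)\cong\mathbb{Z}\oplus T(\textup{Col}(K))$ of Theorem \ref{coloringfacts} (where $T(\textup{Col}(K))$ is torsion), the kernel has rank $0$ and carries all of the torsion, hence $\ker\bar\epsilon=T(\textup{Col}(K))$. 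Deleting the column of $M'(D)$ indexed by $a$ presents the cokernel $\mathbb{Z}^{k-1}/\pi_a(R)$, which a one-line check identifies with the quotient $\textup{Col}(K)/\langle \bar a\rangle$. Because $\bar\epsilon(\bar a)=1$ generates the free summand, quotienting by $\bar a$ collapses that summand and leaves $\ker\bar\epsilon=T(\textup{Col}(K))$ — a short direct computation using the splitting. This establishes the ``furthermore.''

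For the relation-removal claim I would show that among the $k$ rows of the column-deleted $k\times(k-1)$ matrix there is an integral linear dependency in which every coefficient is a unit $\pm1$. Granting this, any single row lies in the $\mathbb{Z}$-span of the remaining rows, so deleting it does not change the subgroup of $\mathbb{Z}^{k-1}$ generated by the rows, and hence does not change the cokernel; the resulting square $(k-1)\times(k-1)$ submatrix therefore still presents $T(\textup{Col}(K))$, which is exactly the convenient presentation one wants for computing $\det(K)$ and the Smith normal form. The existence of such a dependency is the classical redundancy of the Wirtinger presentation: the coloring matrix is the Alexander matrix specialized at $t=-1$, and the fundamental Fox-calculus identity among the Wirtinger relators produces a row dependency whose coefficients are powers of $t$, which become $\pm1$ at $t=-1$. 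One should also check that rank is not lost under column deletion, i.e.\ that the $k$ projected rows still span a rank-$(k-1)$ subgroup; this follows since the right kernel of $M'(D)$ is spanned by the all-ones vector (constant colorings), which is not annihilated by $\bar\epsilon$, so projecting off one coordinate preserves rank.

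The hard part will be pinning down the unit-coefficient dependency cleanly. The rank and augmentation bookkeeping is routine, but establishing that the single relation among the coloring relations has all coefficients $\pm1$ — uniformly, for an arbitrary diagram in which a single arc may pass over several crossings — is the real content. I would derive it from the $t=-1$ specialization of the Fox-derivative identity relating $\partial r_i/\partial x_\ell$ to the relators, or else argue combinatorially by tracing the redundant relator once around the diagram, and this is the step I expect to consume most of the work.
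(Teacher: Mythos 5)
The paper offers no proof of this theorem---it defers entirely to \cite{livingston1993knot}---so there is no in-text argument to compare against; I will assess your proposal against the standard one. Your reading of the statement is the right one: taken literally, the first sentence would say the $(k-1)\times(k-1)$ matrix still presents $\textup{Col}(K)$, which is false (it presents $T(\textup{Col}(K))$, as the paper's later use of $M(D)$ as a presentation of the torsion submodule confirms), and your interpretation---column deletion yields $T(\textup{Col}(K))$, and a subsequent row deletion leaves the cokernel unchanged---is the intended content. Your augmentation argument for the ``furthermore'' is complete and correct: $\bar\epsilon$ is well defined because every relation has coefficient sum $0$, the splitting together with the rank-one statement in Theorem \ref{coloringfacts} forces $\ker\bar\epsilon = T(\textup{Col}(K))$, and deleting the column for $a$ presents $\textup{Col}(K)/\langle \bar a\rangle \cong \ker\bar\epsilon$ because $1\mapsto\bar a$ splits $\bar\epsilon$. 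This is cleaner than the usual ``constant colorings span the free summand'' argument. The only unexecuted step is the one you flag yourself: the $\pm 1$ dependency among the rows. That is indeed the classical redundancy of the Wirtinger presentation---a suitable product of conjugates of the crossing relators is trivial in the free group on the arcs, and Fox-differentiating and abelianizing turns this into a row dependency with monomial coefficients $\pm t^{n_i}$, which become $\pm 1$ at $t=-1$; since a dependency among the full rows restricts to the column-deleted rows, any single row may then be discarded without changing the row lattice or the cokernel. If you cite that identity (Crowell--Fox or Burde--Zieschang) rather than reproving it, your argument is complete. Your auxiliary remark about rank preservation under column deletion is both slightly off (the relevant point is simply that $M'(D)\mathbf{1}=0$, so the deleted column lies in the span of the rest) and unnecessary, since your own argument already shows the cokernel of the column-deleted matrix is torsion.
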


Using the isomorphism $\phi$, we can determine a relations submatrix for $T(\widetilde{\textup{Col}}(K))$. It is important to emphasize that these two presentations are for the same module. We will see, however, that the Dehn coloring module presentation has important advantages in our context.

\begin{theorem}
Removing a single generator corresponding to any face adjacent to the unbounded face of $D$ gives a presentation for $T(\widetilde{\textup{Col}}(K))$.
\end{theorem}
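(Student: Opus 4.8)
The plan is to exhibit an explicit splitting $\widetilde{\textup{Col}}(D) \cong T(\widetilde{\textup{Col}}(K)) \oplus \mathbb{Z}$ in which the free summand is detected by the checkerboard shading and in which the chosen face generator $g$ spans that free summand. Granting this, deleting the column for $g$ from the presentation matrix $\widetilde{M}'(D)$ amounts to quotienting $\widetilde{\textup{Col}}(D)$ by $\langle g\rangle$, which leaves exactly the torsion part $T(\widetilde{\textup{Col}}(K))$. (One could instead try to transport Theorem \ref{Foxsub} across $\phi$, but since $\phi$ mixes faces and arcs through the incidence structure it does not send a single face generator to a single arc generator, so the shading map below is cleaner.)

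First I would define the \emph{shading map} $\sigma : \widetilde{\textup{Col}}(D) \to \mathbb{Z}$ on generators by $\sigma(f) = 1$ when $f$ is shaded and $\sigma(f) = 0$ when $f$ is unshaded. The key point is that $\sigma$ annihilates every Dehn relation. At each crossing the four incident faces form two pairs of faces opposite across the crossing; in the checkerboard shading opposite faces receive the same color while the two pairs receive different colors. Since the relation $a + b \equiv c + d$ pairs each color once with coefficient $+1$ and once with coefficient $-1$, we get $\sigma(a) + \sigma(b) - \sigma(c) - \sigma(d) = 0$. This stays correct when one of the four faces is the unbounded face: that face is unshaded and set to $0$, and its opposite face is then unshaded as well, so both color values entering the relation still cancel.

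Next I would identify $\ker \sigma$ with the torsion submodule. By Theorem \ref{coloringfacts} we have $\widetilde{\textup{Col}}(K) \cong T(\widetilde{\textup{Col}}(K)) \oplus \mathbb{Z}$, so $\widetilde{\textup{Col}}(D)$ has rank $1$. Any shaded face maps to $1$, so $\sigma$ is surjective; hence $\ker\sigma$ has rank $0$, is therefore torsion, and equals $T(\widetilde{\textup{Col}}(K))$ (it contains all torsion since $\mathbb{Z}$ is torsion-free, and is contained in the torsion since it has rank $0$). Choosing $g$ to be a face adjacent to the unbounded face, the checkerboard condition forces $g$ to be shaded, so $\sigma(g) = 1$. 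A standard argument then yields the internal direct sum $\widetilde{\textup{Col}}(D) = T(\widetilde{\textup{Col}}(K)) \oplus \mathbb{Z} g$: every element $m$ satisfies $m - \sigma(m)\, g \in \ker\sigma$, and $\mathbb{Z} g \cap \ker\sigma = 0$ since $\sigma(ng) = n$.

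Finally I would translate the deletion of a generator into this splitting. Deleting the column of $\widetilde{M}'(D)$ indexed by $g$ presents the cokernel of the projected relations, which is canonically $\widetilde{\textup{Col}}(D)/\langle g\rangle$; by the previous paragraph this quotient is $(T(\widetilde{\textup{Col}}(K)) \oplus \mathbb{Z} g)/\mathbb{Z} g \cong T(\widetilde{\textup{Col}}(K))$. I expect the one genuinely delicate step to be the well-definedness of $\sigma$ at crossings meeting the unbounded face, where one must check that the shading of the omitted region is consistent with the cancellation; everything else is bookkeeping with the rank-$1$ structure supplied by Theorem \ref{coloringfacts}.
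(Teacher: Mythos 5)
Your proof is correct, but it takes a genuinely different route from the paper. The paper's proof is a two-line transport of Theorem \ref{Foxsub} across the isomorphism $\phi$: for a face $f$ adjacent to the unbounded face $u$ and the arc $a$ separating them, one has $\phi(\widetilde{C})(a) = \widetilde{C}(f) + \widetilde{C}(u) = \widetilde{C}(f)$, so killing the face generator $f$ corresponds exactly to killing the arc generator $a$, and the Fox-side statement does the rest. (Your parenthetical dismissing this route is therefore slightly off: $\phi$ does \emph{not} mix generators in this particular case, precisely because the hypothesis puts $f$ next to the unbounded face --- that is the whole point of the paper's argument.) What you do instead is build an explicit splitting $\widetilde{\textup{Col}}(D) = \ker\sigma \oplus \mathbb{Z}g$ via the shading homomorphism $\sigma$, verify that $\sigma$ annihilates each relation $a+b-c-d$ because the diagonal pairs $(a,d)$ and $(b,c)$ share a shade, and identify $\ker\sigma$ with the torsion using the rank-one statement in Theorem \ref{coloringfacts}. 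This is self-contained (it does not invoke Theorem \ref{Foxsub} or the map $\phi$ at all), it makes visible exactly where the hypothesis ``adjacent to the unbounded face'' is used (to force $\sigma(g)=1$), and it correctly handles the crossings touching the unbounded region; the trade-off is that it is longer and re-proves a fact the paper already has on the Fox side. Both arguments are sound; yours buys transparency about the role of the checkerboard shading, the paper's buys brevity.
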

\begin{proof}
Let $f$ be a face of $D$ adjacent to the unbounded face and $a$ be an arc separating them. By Theorem \ref{Foxsub}, the submodule $M\leq \textup{Col}(D)$ generated by all arcs except $a$ modulo crossing relations is $T(\textup{Col}(K))$. By definition of the map $\phi$, it follows that $\phi^{-1}(M) = T(\widetilde{\textup{Col}}(K))$, and $\phi^{-1}(M)$ is generated by all faces except $f$ modulo crossing relations. 
\end{proof}

Denote by $M(D)$ and $\widetilde{M}(D)$ respectively the submatrices (both square) presenting the Fox and Dehn coloring torsion submodules. We obtain $M(D)$  by removing any one row and any one column of $M'(D)$ while $\widetilde{M}(D)$ is obtained by removing one column from $\widetilde{M}'(D)$ that corresponds to a face adjacent to the unbounded face. These are two different presentation matrices for the full torsion submodule of the coloring module. To determine the structure of this submodule, we can compute the Smith normal form of either of these matrices. We review this process in preparation for the upcoming sections. 

Let $X$ be a matrix with integer entries. The {\it Smith normal form} of $X$, denoted $\textup{SNF}(X)$, is the diagonal matrix with entries $(s_1, \ldots, s_k)$ such that $s_i | s_{i+1}$ for all $1\leq i<k$ where $$s_i = \frac{d_i}{d_{i-1}}$$  and $d_i$ is the gcd of all $i\times i$ minors of $X$. 

The values $s_i$ are called invariant factors, and the values $d_i$ are called determinental divisors of the matrix \cite{newman1997smith}. 

If $X$ is a presentation matrix for an abelian group $G$, then the invariant factors completely determine the structure of $G$. In particular, $$G \cong \mathbb{Z}/s_1 \oplus \mathbb{Z}/s_2 \oplus \cdots \oplus \mathbb{Z}/s_k.$$ Any two presentation matrices for the same group will yield the same list of invariant factors up to insertion or deletion of 1's. 

Ge-Jablan-Kauffman-Lopes have a useful theorem which interprets SNF data specifically as it relates to colorings \cite{ge2012equivalence}. We restate it using our notation.
\begin{theorem}
Let $n$ be some positive integer. Let $s_1, \ldots, s_k$ be the list of invariant factors in the Smith normal form for $M(D)$ (or $\widetilde{M}(D)$). Then the number of $n$-colorings of $D$ is $$ |\textup{Col}_n(D)| = \lvert\widetilde{\textup{Col}}_n(D)\rvert = n \cdot \prod_{i=1}^k \gcd(s_i, n).$$
\end{theorem}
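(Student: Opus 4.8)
The plan is to reduce the count to the algebraic behavior of the coloring module under tensoring with $\mathbb{Z}/n$, using the structural facts about $\textup{Col}(K)$ established in Theorem~\ref{coloringfacts}. The essential inputs are that $M(D)$ is a presentation matrix for the torsion submodule $T(\textup{Col}(K))$, that $\textup{Col}(K)$ has rank exactly $1$, and that reducing the integral coloring module modulo $n$ recovers the $n$-coloring module. Once these are in place the formula is purely formal.

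First I would combine the structure theorem for finitely generated abelian groups with Theorem~\ref{coloringfacts}(3)--(4) to write $\textup{Col}(K) \cong T(\textup{Col}(K)) \oplus \mathbb{Z}$, since the module has rank $1$ and $T(\textup{Col}(K))$ is its torsion part. Because $M(D)$ presents $T(\textup{Col}(K))$ and its Smith normal form has invariant factors $s_1, \ldots, s_k$, the structure theorem gives $T(\textup{Col}(K)) \cong \bigoplus_{i=1}^k \mathbb{Z}/s_i$, and hence $\textup{Col}(K) \cong \mathbb{Z} \oplus \bigoplus_{i=1}^k \mathbb{Z}/s_i$.

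Next I would apply Theorem~\ref{coloringfacts}(2), namely $\textup{Col}_n(D) \cong \textup{Col}(K) \otimes \mathbb{Z}/n$, and tensor the decomposition above with $\mathbb{Z}/n$. Since tensoring distributes over direct sums, $\mathbb{Z} \otimes \mathbb{Z}/n \cong \mathbb{Z}/n$, and $\mathbb{Z}/s_i \otimes \mathbb{Z}/n \cong \mathbb{Z}/\gcd(s_i, n)$, taking orders yields
$$|\textup{Col}_n(D)| = n \cdot \prod_{i=1}^k \gcd(s_i, n),$$
which is the claimed formula on the Fox side.

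The computation is essentially mechanical, so the closest thing to an obstacle is bookkeeping: confirming that the free $\mathbb{Z}$ summand guaranteed by the rank-$1$ statement is precisely what contributes the leading factor of $n$, and that $M(D)$ presents only the torsion submodule, so that every $s_i$ is nonzero and no invariant factor equal to $0$ is silently swept into the free part. Finally, the Dehn case needs no new argument: by Lemma~\ref{coloringgroups} the map $\phi$ gives $\widetilde{\textup{Col}}(K) \cong \textup{Col}(K)$, and $\widetilde{M}(D)$ presents the same torsion submodule $T(\widetilde{\textup{Col}}(K)) \cong T(\textup{Col}(K))$, so its Smith normal form carries the identical list of invariant factors and the identical count follows.
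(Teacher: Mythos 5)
Your argument is correct. Note that the paper itself does not prove this statement: it is restated from Ge--Jablan--Kauffman--Lopes \cite{ge2012equivalence} and used as a black box, so there is no in-paper proof to compare against. Your derivation is the natural self-contained one, and it correctly assembles the ingredients the paper does supply: $M(D)$ (resp.\ $\widetilde{M}(D)$) presents the torsion submodule $T(\textup{Col}(K))$, so the structure theorem gives $\textup{Col}(K)\cong \mathbb{Z}\oplus\bigoplus_{i}\mathbb{Z}/s_i$ with every $s_i\neq 0$, and Theorem~\ref{coloringfacts}(2) plus $\mathbb{Z}/s_i\otimes\mathbb{Z}/n\cong\mathbb{Z}/\gcd(s_i,n)$ yields the count. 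Your closing remarks handle the only real pitfalls: the rank-$1$ free summand is what contributes the leading $n$, and the Fox and Dehn presentation matrices give the same invariant factors up to inserting or deleting $1$'s, which leaves the product $\prod_i\gcd(s_i,n)$ unchanged. One cosmetic caveat: an $n$-coloring is literally a homomorphism $\textup{Col}(D)\to\mathbb{Z}/n$, so the identification with $\textup{Col}(K)\otimes\mathbb{Z}/n$ rather than $\Hom(\textup{Col}(K),\mathbb{Z}/n)$ deserves a word; but for a finitely generated abelian group these two have the same order, and in any case the paper hands you the tensor statement as part of Theorem~\ref{coloringfacts}, so your use of it is legitimate.
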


We conclude this section by computing $\textup{SNF}(\widetilde{M}(D))$ from the example in Figure \ref{knotandmatrices}. 
Removing the first column from $\widetilde{M}'(D)$ gives the presentation matrix for $T(\widetilde{\textup{Col}}(K))$ shown below. Using the algorithm described above, we obtain the diagonal matrix shown. Hence we conclude that $\widetilde{\textup{Col}}(K) = \mathbb{Z}/5 \oplus \mathbb{Z}$ and the determinant of the knot is 5. If $\textup{gcd}(5,n)=1$, then $T(\widetilde{\textup{Col}}_{n}(K)) = \mathbb{Z}/n$. For any multiple $5l$ of 5, we have $T(\widetilde{\textup{Col}}_{5l}(K)) = \mathbb{Z}/{5} \oplus \mathbb{Z}/{5l}$.
$$\widetilde{M}(D)=  \left( \begin{matrix} 
 0 & 0 & -1 & 1 \\
 0 & -1 & 1 & 1 \\
 1 & 1 & 0 & 0 \\
 -1 & 1 & 1 & 0 \end{matrix} \right) \hspace{.25in}  \rightsquigarrow \hspace{.25in} \textup{SNF}(\widetilde{M}(D))=\left( \begin{matrix}
1 & 0 & 0 & 0 \\
 0 & 1 & 0 & 0 \\
   0 & 0 & 1 & 0 \\
 0 & 0 & 0 & 5 \\
 \end{matrix} \right).$$ 
 
In the case of the figure-eight knot, the Smith normal form does not give any more information than the determinant. When the determinant is composite with repeated factors, though, Smith normal form is a finer invariant. Note that we could just as easily have  computed Smith normal form using the presentation matrix for the entire coloring module. In that case, one additional invariant factor of 0 would appear. The graph theoretic computations in the next two sections work by focusing on the torsion part, which is why we have emphasized it here.

\section{Encoding matrix data via graphs}

We have not yet seen any substantive differences between Fox and Dehn coloring. The remainder of the paper shows how Dehn coloring more naturally lends itself to interpreting coloring data using graphs. We begin with a review of pertinent graph theory terms and a discussion about how one can interpret any matrix as a weighted bipartite graph.

A {\it bipartite graph} $\Gamma = (V, W, E)$ has vertex sets $V$ and $W$ and edge set $E$ where all edges have one endpoint in $V$ and one in $W$. When $E$ contains every possible edge between $V$ and $W$, we say that $\Gamma$ is a {\it complete bipartite graph}. Complete bipartite graphs on $n+m$ vertices are sometimes denoted $K_{n,m}$. A {\it complete graph} on $n$ vertices, denoted by $K_n$, has $n$ vertices with every possible edge between them.

A graph is called {\it planar} if it can be properly embedded in the plane. A {\it plane} graph is a planar graph with a chosen embedding. A graph $H$ is called a {\it minor} of a graph $\Gamma$ if $H$ comes from deleting vertices and deleting or contracting edges of $\Gamma$. A well-known result in graph theory states that a finite graph is planar if and only if it does not contain $K_5$ or $K_{3,3}$ as a minor \cite{wagner1937eigenschaft}.

A {\it dimer} is an edge in a graph. A {\it dimer covering} is a subset $m$ of $E$ such that each vertex in $\Gamma$ is an endpoint of exactly one edge in $m$. Dimer coverings are also called perfect matchings. Let $\mathcal{M}$ be the set of all dimer coverings of $\Gamma$; note that $\mathcal{M} = \emptyset$ whenever $|V| \neq |W|$. 

Given a $p \times q$ matrix $X = (x_{ij})$, we construct a weighted complete bipartite graph $\Gamma_X= (V, W, E, \mu_X)$ with vertex sets $V = \{v_1, \ldots, v_p\}$ and $W = \{w_1, \ldots, w_q\}$ and weight function $\mu_X(v_iw_j) = x_{ij}$. According to this definition, the graph $\Gamma_X$ corresponding to any matrix with $p,q\geq 3$ would contain an induced copy of the complete bipartite graph $K_{3,3}$ and hence would be nonplanar. 

For our purposes, it is sufficient to restrict to edges with nonzero weight. Even with the convention of omitting weight zero edges, many matrices do not produce planar graphs. We are particularly interested in the graphs corresponding to $M(D)$ and $\widetilde{M}(D)$. 

\begin{remark}
There exist knot diagrams $D$ for which $\Gamma_{M(D)}$ is not planar.
\end{remark}

 As an example, consider the diagram for the knot $9_{42}$ shown in Figure \ref{942example}. The figure also shows a corresponding presentation matrix $M(D)$  for $T(\textup{Col}(9_{42}))$, the graph $\Gamma_{M(D)}$, and a subgraph of $\Gamma_{M(D)}$ that is a subdivision of $K_{3,3}$. Since $\Gamma_{M(D)}$ contains a a subdivision of $K_{3,3}$, it is not a planar graph. 

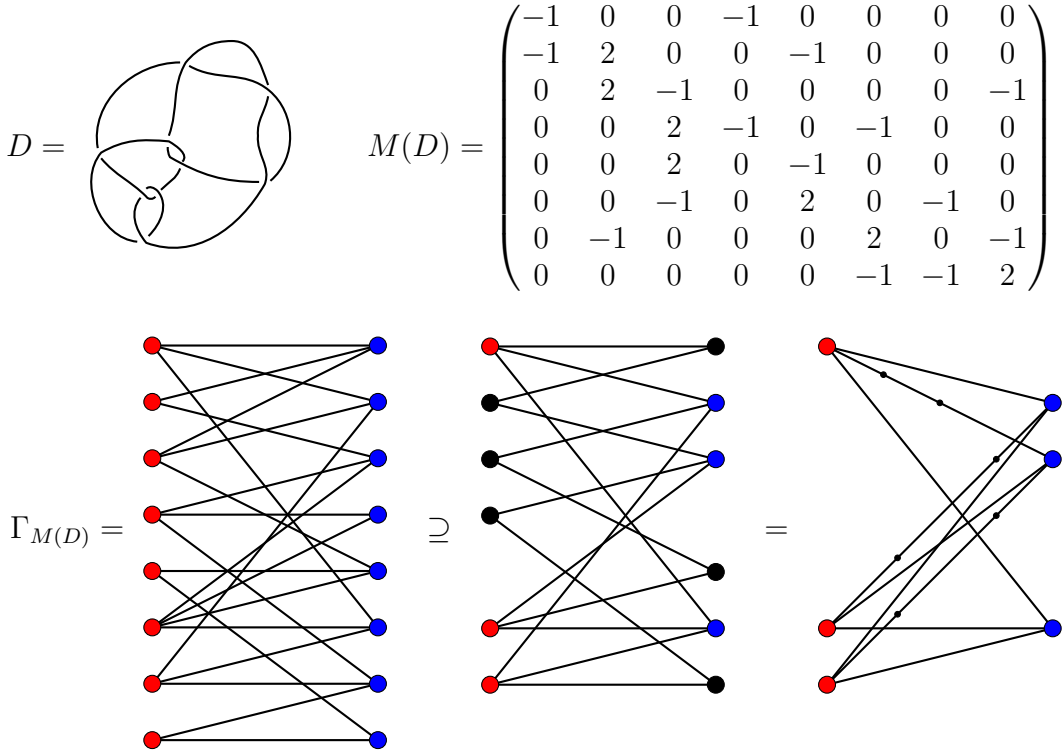
\begin{figure}[h]
\begin{center}
\raisebox{40pt}{$D =$} 
\begin{tikzpicture}[scale=.15]
\draw[style=thick] (0, 0.5) to[out=70, in=240] (1.5, 7) to [out=50, in=180](5.6, 8.9) to[out=360,in=110] (8.8,5);
\draw[style=thick] (8.8,4) to [out=260, in=95] (7.9, .5) to [out=265, in=70](8.5,-3.8) to [out=240, in=340](-2,-9) to [out=130, in=230] (-2.5, -5);
\draw[style=thick](-2.8, -8.9) to [out=175, in=230] (-6, -1) to [out=30, in=175] (0,0) to [out=330, in=45] (1.2, -1.8);
\draw[style=thick] (-1.8, -8.4) to [out=45, in= 310] (-1, -4.4) to [out=135, in=45] (-2, -4.2);
\draw[style=thick] (-1.2, -4.9) to [out=240, in=300] (-2, -4.9) to [out= 135, in= 310](-6, -1.5);
\draw[style=thick] (-.7, -4.3) to [out=45, in=250] (1,-2.4);
\draw[style=thick] (-6.3, -.6) to[out=100, in=180] (1, 7);
\draw[style=thick] (1.8, 6.7) to [out=330, in=140](8.9, 4.5) to [out=320, in=40] (9, -3.5);
\draw[style=thick] (8, -3.6) to [out=180, in=330] (.1,-1.4) to [out=117, in=270](-.1, -.5);
\end{tikzpicture}
\hspace{5mm} \raisebox{40pt}{
$ M(D) = \left(
\begin{matrix}
 -1 & 0 & 0 & -1 & 0 & 0 & 0 & 0 \\ 
 -1 & 2 & 0 & 0 & -1 & 0 & 0 & 0 \\ 
 0 & 2 & -1 & 0 & 0 & 0 & 0 & -1 \\ 
 0 & 0 & 2 & -1 & 0 & -1 & 0 & 0 \\ 
 0 & 0 & 2 & 0 & -1 & 0 & 0 & 0 \\ 
 0 & 0 & -1 & 0 & 2 & 0 & -1 & 0 \\ 
 0 & -1 & 0 & 0 & 0 & 2 & 0 & -1 \\ 
 0 & 0 & 0 & 0 & 0 & -1 & -1 & 2
\end{matrix} \right) $ }

\vspace{.2in}

 \raisebox{80pt}{$\Gamma_{M(D)}=$}\hspace{.1in}\begin{tikzpicture}[scale=.75]

\draw[color=black, style=thick] (0,1) -- (4,1);
\draw[color=black, style=thick] (0,1) -- (4,2);
\draw[color=black, style=thick] (0,2) -- (4,2);
\draw[color=black, style=thick] (0,2) -- (4,3);
\draw[color=black, style=thick] (0,2) -- (4,7);
\draw[color=black, style=thick] (0,3) -- (4,3);
\draw[color=black, style=thick] (0,3) -- (4,4);
\draw[color=black, style=thick] (0,3) -- (4,5);
\draw[color=black, style=thick] (0,3) -- (4,6);
\draw[color=black, style=thick] (0,4) -- (4,1);
\draw[color=black, style=thick] (0,4) -- (4,4);
\draw[color=black, style=thick] (0,5) -- (4,2);
\draw[color=black, style=thick] (0,5) -- (4,5);
\draw[color=black, style=thick] (0,5) -- (4,6);
\draw[color=black, style=thick] (0,6) -- (4,4);
\draw[color=black, style=thick] (0,6) -- (4,7);
\draw[color=black, style=thick] (0,6) -- (4,8);
\draw[color=black, style=thick] (0,7) -- (4,6);
\draw[color=black, style=thick] (0,7) -- (4,8);
\draw[color=black, style=thick] (0,8) -- (4,3);
\draw[color=black, style=thick] (0,8) -- (4,7);
\draw[color=black, style=thick] (0,8) -- (4,8);

\draw[fill=red,radius=.15] (0,1)circle;
\draw[fill=red,radius=.15] (0,2)circle;
\draw[fill=red,radius=.15] (0,3)circle;
\draw[fill=red,radius=.15] (0,4)circle;
\draw[fill=red,radius=.15] (0,5)circle;
\draw[fill=red,radius=.15] (0,6)circle;
\draw[fill=red,radius=.15] (0,7)circle;
\draw[fill=red,radius=.15] (0,8)circle;

\draw[fill=blue,radius=.15] (4,1)circle;
\draw[fill=blue,radius=.15] (4,2)circle;
\draw[fill=blue,radius=.15] (4,3)circle;
\draw[fill=blue,radius=.15] (4,4)circle;
\draw[fill=blue,radius=.15] (4,5)circle;
\draw[fill=blue,radius=.15] (4,6)circle;
\draw[fill=blue,radius=.15] (4,7)circle;
\draw[fill=blue,radius=.15] (4,8)circle;

\end{tikzpicture} \hspace{.1in} \raisebox{80pt}{$\supseteq$}\hspace{.1in}
\raisebox{21pt}{\begin{tikzpicture}[scale=.75]

\draw[color=black, style=thick] (0,2) -- (4,2);
\draw[color=black, style=thick] (0,2) -- (4,3);
\draw[color=black, style=thick] (0,2) -- (4,7);
\draw[color=black, style=thick] (0,3) -- (4,3);
\draw[color=black, style=thick] (0,3) -- (4,4);

\draw[color=black, style=thick] (0,3) -- (4,6);

\draw[color=black, style=thick] (0,5) -- (4,2);

\draw[color=black, style=thick] (0,5) -- (4,6);
\draw[color=black, style=thick] (0,6) -- (4,4);
\draw[color=black, style=thick] (0,6) -- (4,7);

\draw[color=black, style=thick] (0,7) -- (4,6);
\draw[color=black, style=thick] (0,7) -- (4,8);
\draw[color=black, style=thick] (0,8) -- (4,3);
\draw[color=black, style=thick] (0,8) -- (4,7);
\draw[color=black, style=thick] (0,8) -- (4,8);

\draw[fill=red,radius=.15] (0,2)circle;
\draw[fill=red,radius=.15] (0,3)circle;

\draw[fill=black,radius=.15] (0,5)circle;
\draw[fill=black,radius=.15] (0,6)circle;
\draw[fill=black,radius=.15] (0,7)circle;
\draw[fill=red,radius=.15] (0,8)circle;

\draw[fill=black,radius=.15] (4,2)circle;
\draw[fill=blue,radius=.15] (4,3)circle;
\draw[fill=black,radius=.15] (4,4)circle;

\draw[fill=blue,radius=.15] (4,6)circle;
\draw[fill=blue,radius=.15] (4,7)circle;
\draw[fill=black,radius=.15] (4,8)circle;

\end{tikzpicture}} \hspace{.1in} \raisebox{80pt}{$=$}\hspace{.1in}
\raisebox{21pt}{\begin{tikzpicture}[scale=.75]

\draw[color=black, style=thick] (0,2) -- (4,6);
\draw[color=black, style=thick] (0,2) -- (4,3);
\draw[color=black, style=thick] (0,2) -- (4,7);
\draw[color=black, style=thick] (0,3) -- (4,3);
\draw[color=black, style=thick] (0,3) -- (4,7);

\draw[color=black, style=thick] (0,3) -- (4,6);

\draw[color=black, style=thick] (0,8) -- (4,6);
\draw[color=black, style=thick] (0,8) -- (4,3);
\draw[color=black, style=thick] (0,8) -- (4,7);

\draw[fill=red,radius=.15] (0,2)circle;
\draw[fill=red,radius=.15] (0,3)circle;

\draw[fill=black,radius=.05] (3,5)circle;
\draw[fill=black,radius=.05] (3,6)circle;

\draw[fill=black,radius=.05] (1,7.5)circle;
\draw[fill=black,radius=.05] (2,7)circle;

\draw[fill=red,radius=.15] (0,8)circle;

\draw[fill=black,radius=.05] (1.25,4.25)circle;
\draw[fill=black,radius=.05] (1.25,3.25)circle;
\draw[fill=blue,radius=.15] (4,3)circle;

\draw[fill=blue,radius=.15] (4,6)circle;
\draw[fill=blue,radius=.15] (4,7)circle;

\end{tikzpicture}}
\end{center}
\caption{A nonplanar graph corresponding to $T(\textup{Col}(9_{42}))$}
\label{942example}
\end{figure}

The graph $\Gamma_{\widetilde{M}(D)}$, on the other hand, is always planar with a particularly nice plane embedding reflecting the structure of the knot. In fact, this graph is a weighting of the balanced overlaid Tait graph which has appeared in several recent papers \cite{cohen2012determinant, CDR, cohen2012kauffman}.

A balanced overlaid Tait (BOT) graph $\Gamma_D = (V, W, E)$ corresponding to a knot diagram $D$ which is used in \cite{CDR} (see also \cite{cohen2012determinant, cohen2012kauffman}) is constructed as follows.
\begin{itemize}
\item{The vertex set $V$ is the set of crossings of $D$.}
\item{The vertex set $W$ is the set of all but one bounded face of $D$. The omitted face is always chosen to be adjacent to the unbounded face.}
\item{Given vertices $x\in V$ and $y\in W$, the edge $xy\in E$ if and only if the crossing $x$ is incident to face $y$.}
\end{itemize} 

Planarity of the BOT graph follows directly from its construction.This graph is called the balanced overlaid Tait graph since it comes from superimposing the two Tait graphs corresponding to the diagram and introducing a third vertex set where the two graphs meet. This means that the BOT graph is in fact tripartite, but we will not use this here.  Whenever we consider BOT graphs, we fix the plane embedding that comes from the knot diagram.

\begin{lemma}\label{matdimer}
Consider a knot diagram $D$ and presentation matrix $\widetilde{M}(D)$ for $T(\widetilde{\textup{Col}}(D))$ that comes from removing face generator $f$. The graph $\Gamma_{\widetilde{M}(D)}$ is the weighted BOT graph for $D$ coming from omitting face $f$. 
\end{lemma}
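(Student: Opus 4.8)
The plan is to check that $\Gamma_{\widetilde{M}(D)}$ and the weighted BOT graph omitting $f$ agree as weighted bipartite graphs, by comparing their vertex sets, edge sets, and edge weights, with all three pieces of data read off the Dehn relation at each crossing. First I would match the vertex sets. Since $\widetilde{M}(D)$ is obtained from $\widetilde{M}'(D)$ by deleting only the column indexed by $f$, its rows are still indexed by the crossings of $D$ and its columns by the bounded faces other than $f$. By the construction recalled just before the lemma, the vertex set of $\Gamma_{\widetilde{M}(D)}$ coming from the rows is therefore the crossing set $V$, and the vertex set coming from the columns is the set $W$ of bounded faces with $f$ removed. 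Because $f$ is chosen adjacent to the unbounded face, these are exactly the two vertex sets of the BOT graph omitting $f$.

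Next I would read off the edges and weights. The entry $\widetilde{M}(D)_{xy}$ is the coefficient of the face $y$ in the Dehn relation at the crossing $x$. The overstrand at $x$ splits the four incident corners into two pairs lying on its two sides, and the relation assigns $+1$ to the corners on one side and $-1$ to the corners on the other (see Figure \ref{coloringrels}). Provided the four corners of $x$ are occupied by four \emph{distinct} faces, this means $\widetilde{M}(D)_{xy} = \pm 1$ exactly when $x$ is incident to $y$, and $0$ otherwise; corners lying in the unbounded face or in the deleted face $f$ simply contribute no column and are discarded (consistent with the unbounded face carrying the value $0$ and with $f$ and the unbounded face being absent from $W$ in both constructions). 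Under the nonzero-weight convention used to build $\Gamma_{\widetilde{M}(D)}$, this is precisely the statement that the edges of $\Gamma_{\widetilde{M}(D)}$ are the crossing--face incidences recorded by the BOT graph, carrying the $\pm 1$ weights that define its weighting.

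The crux, and the step needing the most care, is justifying the italicized hypothesis that the four corners at each crossing are occupied by distinct faces, since otherwise two equal-sign contributions could combine to $\pm 2$ or two opposite-sign contributions could cancel to $0$ and destroy the edge--incidence correspondence. I would handle this with the checkerboard shading of $D$. Each face receives a single color, while the four wedges around a crossing alternate in color, so the two wedges sharing a color are the two \emph{opposite} wedges; hence a single face can occupy two corners of one crossing only if it fills a pair of opposite wedges. But a face filling two opposite wedges at $x$ yields a simple closed curve meeting the diagram only at $x$, exhibiting $x$ as a nugatory crossing, which is excluded because $D$ is reduced. Thus every crossing has four distinct incident faces, no entry of $\widetilde{M}(D)$ exceeds $1$ in absolute value, and no spurious cancellation occurs. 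Combining the three matchings gives that $\Gamma_{\widetilde{M}(D)}$ is the weighted BOT graph for $D$ omitting $f$, as claimed.
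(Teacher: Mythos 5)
Your proof is correct and follows essentially the same route as the paper's: identify the rows of $\widetilde{M}(D)$ with crossings, the columns with the bounded faces other than $f$, and observe that an entry is nonzero exactly when the crossing is incident to the face, so that $\Gamma_{\widetilde{M}(D)}$ with zero-weight edges omitted is the BOT graph. The only difference is that the paper simply asserts this incidence claim, whereas you carefully rule out a repeated face at a crossing (which could produce a $\pm 2$ entry or a spurious cancellation) via the checkerboard shading together with the reducedness of $D$ --- a worthwhile detail the paper leaves implicit.
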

\begin{proof}
The matrix $\widetilde{M}(D)$ is obtained from $\widetilde{M}'(D)$ by removing one column corresponding to the bounded face $f$ of $D$ adjacent to the unbounded region. The graph $\Gamma_{\widetilde{M}(D)}$ has one vertex for each crossing, and one vertex for each bounded face except $f$. An edge of $\Gamma_{\widetilde{M}(D)}$ has nonzero weight exactly when a face and crossing meet. Hence $\Gamma_{\widetilde{M}(D)}$ (with edges of weight 0 omitted) is exactly the BOT graph for $D$ that comes from omitting face $f$.
\end{proof}

\begin{figure}
\begin{tikzpicture}[scale=.7]

\draw [color=gray] (-.1,.1) to[out=100,in=150] (4,1) to[out=-30, in=60] (4.1,-.9);
\draw [ color=gray] (4,-1.1) to[out=240,in=240](0,0) to[out=50, in=145] (1.9,.1);
\draw [ color=gray] (2.1,-.1) to[out=-45, in=170] (4,-1) to[out=-10, in=270] (5.5,0) to[out=90, in=10] (4.2,1);
\draw [ color=gray] (3.9,.9) to[out=190, in=45] (2,0) to[out=225, in=320] (0,-.2);

\draw[style=thick, double] (-.1,0)--(.75, -1);
\draw[style=ultra thick] (0,0)--(2,0);
\draw[style=ultra thick] (4,1)--(5.15,0);
\draw[style=ultra thick] (2,0)--(.75, -1);
\draw[style=thick, double] (4,-1)--(5.15,0);
\draw[style=ultra thick] (4,-1)--(.75, -1);
\draw[style=ultra thick] (4,1)--(3.5, 0);
\draw[style=ultra thick] (4,-1)--(3.5,0);
\draw[style=thick, double] (2,0)--(3.5,0);

\draw[fill=black,radius=.15] (0,0)circle;
\draw[fill=black, radius=.15] (4,-1)circle;
\draw[fill=black, radius=.15] (2,0)circle;
\draw[fill=black, radius=.15] (4,1)circle;

\draw[fill=white, radius=.15] (.75,0)circle;
\draw[fill=white, radius=.15] (3.5,0)circle;
\draw[fill=white, radius=.15] (.75,-1)circle;
\draw[fill=white, radius=.15] (5.15,0)circle;

\node at (1.8,.75) {\Huge{*}};

\end{tikzpicture}
\caption{A BOT graph for a figure-eight diagram}\label{fig8dimer}
\end{figure}
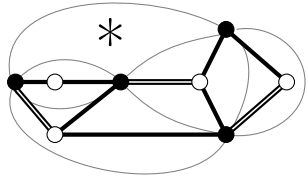

For an example of a BOT graph, see Figure \ref{fig8dimer}. In the figure, the knot diagram is in gray, the omitted bounded face is marked with an asterisk, and doubled lines indicate edges with weight $-1$. Solid edges have weight $+1$.

Now that we have explained a method for translating a matrix to a graph, the goal is to graph theoretically compute matrix data using dimer coverings and  Kasteleyn theory. This method was originally developed to work in the opposite direction -- to obtain graph data via a matrix \cite{kasteleyn1967graph, temperley1961dimer}. Indeed, if one has a planar graph, the number of perfect matchings can be computed by taking a certain determinant. We briefly recall selected concepts from Kasteleyn theory below. For more details, see \cite{kenyon3129lectures, kuperberg1994symmetries, kuperberg1998exploration, kuperberg2002kasteleyn}.

Let $\Gamma = (V, W, E, \mu)$ be a simple, weighted bipartite graph with $V = \{v_1, \ldots, v_p\}$ and $W = \{w_1, \ldots, w_q\}$.  Then the weight matrix for $\Gamma$, denoted by $M_{\Gamma}$, is a $p\times q$ matrix with entries $x_{ij}$ given by $$x_{ij} =  \begin{cases}  \mu(v_iw_j) &\mbox{if $v_iw_j\in E$ and} \\ 
0 & \mbox{otherwise}. \end{cases}.$$
Up to transposition and rearrangement of rows and columns, $M_{\Gamma_{\widetilde{M}(D)}} = \widetilde{M}(D)$. In particular, up to sign, these two matrices have the same determinant and the same Smith normal form.

The following partition function $Z(\Gamma)$ which computes a sum over the set $\mathcal{M}$ of dimer coverings (or perfect matchings) of $\Gamma$ is of particular interest in statistical physics.

$$Z(\Gamma)=\sum_{m \, \in \, \mathcal{M}} \prod_{e \, \in \, m} \mu(e)$$ 
Note that if $\mu\equiv 1$, then $Z(\Gamma)$ counts the dimer coverings of $\Gamma$. Also note that, in terms of $M_\Gamma$, we have $\textup{perm}(M_\Gamma) = \pm Z(\Gamma)$ where $\textup{perm}(M_\Gamma)$ is the permanent or unsigned determinant of the matrix $M_\Gamma$.  When $\Gamma$ is planar (and in certain other cases we won't consider here), one can modify the weighting $\mu$ so that $Z(\Gamma)$ can be computed via a determinant. This modification is called a Kasteleyn weighting.

Let $\Gamma =(V,W,E)$ be a bipartite plane graph. A {\it Kasteleyn weighting} $\epsilon: E \rightarrow \{ \pm 1\}$ has the property that each bounded face with 0 (mod 4) edges has an odd number of $-1$ assignments and each bounded face with 2 (mod 4) edges has an even number of $-1$ assignments. 

\begin{theorem}[Kasteleyn] \label{Kweight}
Every bipartite plane graph  has a Kasteleyn weighting. 
\end{theorem}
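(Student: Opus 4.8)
The plan is to recast the existence of a Kasteleyn weighting as the solvability of a linear system over $\mathbb{F}_2$, and then to prove solvability by a rank argument. First I would record the reformulation of the defining condition. Since $\Gamma$ is bipartite, every cycle—and in particular the boundary of every bounded face—has even length, so each bounded face has either $0$ or $2 \pmod 4$ edges and the two cases of the definition are exhaustive. Writing $2k$ for the number of edges bounding a face, the required property (an odd number of $-1$'s when $2k \equiv 0$, an even number when $2k \equiv 2$) is equivalent to demanding that the product of the edge signs around that face equal $(-1)^{k+1}$. Identifying $\{\pm 1\}$ with $\mathbb{F}_2$ via $+1 \mapsto 0$ and $-1 \mapsto 1$, a weighting $\epsilon$ becomes a vector $x \in \mathbb{F}_2^E$, and the product condition around a bounded face $F$ becomes the single linear equation $\sum_{e \in \partial F} x_e = b_F$, where $b_F \in \mathbb{F}_2$ is the prescribed target bit.

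Next I would assemble these equations into one system $\Phi(x) = b$, where $\Phi \colon \mathbb{F}_2^E \to \mathbb{F}_2^{r}$ (with $r$ the number of bounded faces) sends $x$ to the tuple of its boundary sums, so that the rows of the matrix of $\Phi$ are exactly the characteristic vectors of the bounded-face boundaries. Existence of a Kasteleyn weighting is then precisely the assertion that the particular vector $b$ lies in the image of $\Phi$; it suffices to show that $\Phi$ is surjective, i.e. that $\Phi$ has full row rank $r$.

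The crux is the following planar fact: for a connected plane graph, the boundary cycles of the bounded faces are linearly independent over $\mathbb{F}_2$ and form a basis of the cycle space, whose dimension is $E - V + 1 = r$ by Euler's formula. I would prove this by observing that the only $\mathbb{F}_2$-relation among the boundaries of \emph{all} faces (bounded and unbounded) is that their sum vanishes, since each edge lies on exactly two faces; discarding the unbounded face therefore leaves an independent set. Granting this independence, the rows of $\Phi$ are independent, $\Phi$ is surjective, and the system is solvable for our $b$ (indeed for every $b$), yielding the desired weighting.

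I expect the main obstacle to be the careful justification of the cycle-space claim in the presence of degeneracies—bridges, which are traversed twice along a single face boundary, and disconnected graphs. For a bridge I would note that an edge traversed twice contributes $0$ to its face's $\mathbb{F}_2$-boundary vector and does not change the relevant parity, so the counting is unaffected; for the general, possibly disconnected case I would either argue componentwise or replace $E - V + 1$ by $E - V + c$ throughout, which still equals the number of bounded faces. A clean alternative, should this bookkeeping prove delicate, is an induction on the number of bounded faces: delete an edge $e$ lying on the unbounded face and on exactly one bounded face $F$, apply the inductive hypothesis to $\Gamma - e$, and then choose $\epsilon(e) \in \{\pm 1\}$ to repair the parity of $F$—possible precisely because $e$ meets no other bounded face.
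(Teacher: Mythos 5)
Your argument is correct, but note that the paper does not actually prove Theorem~\ref{Kweight}: it only remarks that the result can be proved constructively and defers to \cite{CDR}. The constructive proof alluded to there is essentially your fallback induction --- remove an edge lying on the unbounded face and exactly one bounded face $F$, apply the inductive hypothesis to the smaller graph (whose bounded faces other than $F$ are unchanged), and use the free sign on the removed edge to repair the parity of $F$. Your primary route, encoding a weighting as $x\in\mathbb{F}_2^{E}$ and solving $\Phi x=b$ where the rows of $\Phi$ are the bounded-face boundary vectors, is a genuinely different and somewhat stronger argument: it shows solvability for \emph{every} right-hand side, so any prescription of face parities is realizable, and it describes the full solution set as a coset of $\ker\Phi$. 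The one step you should tighten is the independence claim: the observation that each edge lies on two faces only shows that the sum of \emph{all} face boundaries vanishes, not that this is the unique relation. A clean fix is the region argument (a nonempty set $S$ of bounded faces whose boundary vectors sum to zero would bound a plane region with empty edge-boundary, forcing $S$ to contain the unbounded face), or first check that the bounded-face vectors span the cycle space and then count dimensions via Euler's formula. Your bookkeeping for bridges is consistent provided doubly traversed edges are counted with multiplicity both in the face length and in the $-1$ count; for the BOT graphs actually used in this paper every bounded face is a quadrilateral, so these degeneracies never arise.
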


One can prove Theorem \ref{Kweight} constructively. For a proof, see \cite{CDR}.  Finally, we state Kasteleyn's theorem which is crucial for our constructions in the next section. For a proof of this theorem, see \cite{kuperberg1998exploration}.

\begin{theorem}[Kasteleyn] \label{KasteleynsTheorem}
Let  $\Gamma=(V, W, E, \mu)$ be a weighted bipartite plane graph, and let $\Gamma'=(V, W, E, \mu\cdot\epsilon)$ where $\epsilon$ is any Kasteleyn weighting of $\Gamma$. Then $$Z(\Gamma') = \sum_{m \, \in \, \mathcal{M}} \prod_{e \, \in \, m} \epsilon(e)\mu(e) = \textup{perm}(M(\Gamma')) = \pm \textup{det}(M(\Gamma))$$ or equivalently $$Z(\Gamma) = \textup{perm}(M(\Gamma)) =  \pm \textup{det}(M(\Gamma')).$$
\end{theorem}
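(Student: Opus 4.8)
The plan is to reduce the entire statement to a single sign-consistency claim and then establish that claim with a planarity-plus-Euler argument. First, I would observe that expanding the permanent and the determinant over $S_n$ (taking $|V|=|W|=n$, since otherwise $\mathcal{M}=\emptyset$ and every quantity is $0$) identifies each nonzero term with a dimer covering: a permutation $\sigma$ contributes a nonzero product exactly when $\{v_iw_{\sigma(i)}\}$ is a perfect matching $m$, with associated permutation $\sigma_m$. Hence $\operatorname{perm}(M_{\Gamma'})=\sum_{m\in\mathcal{M}}\prod_{e\in m}\epsilon(e)\mu(e)=Z(\Gamma')$ and $\operatorname{perm}(M_\Gamma)=Z(\Gamma)$ hold immediately from the definitions, while $\det(M_{\Gamma'})=\sum_m \operatorname{sgn}(\sigma_m)\prod_{e\in m}\epsilon(e)\mu(e)$. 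Everything then follows once I show that $\operatorname{sgn}(\sigma_m)\prod_{e\in m}\epsilon(e)$ takes a single value $c\in\{\pm1\}$ independent of $m$: factoring $c$ out gives $\det(M_{\Gamma'})=c\,Z(\Gamma)$, and using $\epsilon^2\equiv1$ to write $\operatorname{sgn}(\sigma_m)=c\prod_{e\in m}\epsilon(e)$ gives $\det(M_\Gamma)=c\,Z(\Gamma')$, which yields both displayed lines.

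To prove this sign is constant it suffices to compare any two matchings $m_1,m_2$, and I would do so through their symmetric difference $m_1\triangle m_2$, which decomposes into vertex-disjoint cycles $C_1,\dots,C_t$ alternating between edges of $m_1$ and $m_2$; write $2\ell_s$ for the length of $C_s$. Passing from $m_1$ to $m_2$ along each $C_s$ composes the matching permutation with an $\ell_s$-cycle, so $\operatorname{sgn}(\sigma_{m_1})/\operatorname{sgn}(\sigma_{m_2})=\prod_s(-1)^{\ell_s+1}$. Because $\epsilon=\pm1$, the common edges cancel and $\prod_{e\in m_1}\epsilon(e)\big/\prod_{e\in m_2}\epsilon(e)=\prod_{e\in m_1\triangle m_2}\epsilon(e)=\prod_s\prod_{e\in C_s}\epsilon(e)$. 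Thus the target equality $\operatorname{sgn}(\sigma_{m_1})\prod_{e\in m_1}\epsilon(e)=\operatorname{sgn}(\sigma_{m_2})\prod_{e\in m_2}\epsilon(e)$ reduces to showing $\prod_{e\in C_s}\epsilon(e)=(-1)^{\ell_s+1}$ for each alternating cycle.

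The key lemma is a local-to-global computation of the Kasteleyn sign of a cycle. For any cycle $C$ of length $2\ell$ enclosing $k$ graph vertices, I would write $\prod_{e\in C}\epsilon(e)$ as the product of $\prod_{e\in\partial F}\epsilon(e)$ over the bounded faces $F$ lying inside $C$, since each interior edge appears in exactly two such faces and cancels as $\epsilon^2=1$. The Kasteleyn condition says a face with $2d$ boundary edges satisfies $\prod_{e\in\partial F}\epsilon(e)=(-1)^{d+1}$; summing the exponents and applying Euler's formula $V-E+F=2$ to the subgraph bounded by $C$ (with $V=2\ell+k$, $E=2\ell+E_{\mathrm{int}}$, and $F=r+1$ interior plus outer faces) collapses the count to $\prod_{e\in C}\epsilon(e)=(-1)^{\ell+k+1}$. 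To finish I need $k$ to be even for an alternating cycle, and here planarity does the work: the matching edge at any vertex strictly inside $C$ cannot cross the closed curve $C$, so $m_1$ pairs the interior vertices among themselves, forcing their number to be even. Then $(-1)^{\ell_s+k_s+1}=(-1)^{\ell_s+1}$ for each $s$, the two ratios agree, and $c$ is well defined.

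I expect the geometric lemma to be the main obstacle: correctly bookkeeping the face, edge, and vertex counts inside $C$ through Euler's formula, and rigorously justifying that the enclosed-vertex count is even, are precisely the places where planarity of $\Gamma$ (guaranteed for $\Gamma_{\widetilde{M}(D)}$ by Lemma \ref{matdimer}) is genuinely used. The degenerate cases, namely no perfect matchings or $|V|\neq|W|$, should be dispatched at the outset, and the residual $\pm$ ambiguity in the statement simply records the global sign $c$ together with the transposition and row/column-reordering conventions relating $M_\Gamma$ to $\widetilde{M}(D)$.
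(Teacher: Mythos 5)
The paper does not actually prove this theorem: it states it and defers to Kuperberg's exposition (``For a proof of this theorem, see \cite{kuperberg1998exploration}''), so there is no in-paper argument to compare against. Your proposal reconstructs the classical Kasteleyn argument, and it is essentially correct: the reduction of everything to the constancy of $\operatorname{sgn}(\sigma_m)\prod_{e\in m}\epsilon(e)$, the comparison of two matchings via the alternating cycles of $m_1\triangle m_2$, the sign $(-1)^{\ell_s+1}$ of an $\ell_s$-cycle, and the face-product/Euler computation giving $\prod_{e\in C}\epsilon(e)=(-1)^{\ell+k+1}$ with $k$ even by the Jordan-curve/saturation argument are exactly the standard steps, and your bookkeeping ($\sum_F d_F=E_{\mathrm{int}}+\ell$, $r=E_{\mathrm{int}}-k+1$) checks out. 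Two small points to make airtight: Euler's formula in the form $V-E+F=2$ requires the subgraph consisting of $C$ together with everything it encloses to be connected (in general one first reduces to connected components of $\Gamma$, over which both $\det$ and $\operatorname{perm}$ factor, and notes that the paper's face condition implicitly assumes faces with connected boundary walks, as is automatic for the square-faced BOT graphs actually used); and in the evenness argument for $k$ you should say explicitly that no interior vertex can be $m_1$-matched to a vertex \emph{of} $C$, because every vertex of $C$ already carries its $m_1$-edge inside $C$ by the alternating property. With those remarks your proof is complete and, if anything, more self-contained than the paper, which outsources this result entirely.
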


\section{Dimers and the SNF}

The coloring data for a knot can be completely obtained from the Dehn or Fox coloring matrix. While this is computationally convenient, the matrix may obscure important structural information about the knot. Our goal in computing matrix quantities using the BOT graph is to more directly relate coloring data to properties of the diagram. 

The approach of interpreting a determinant via a partition function is not new. It has been used in the study of Ozsv\'{a}th-Szab\'{o} Knot-Floer
homology theory and by the last author and coauthors in studying twisted Alexander polynomials  \cite{CDR, lowrance2008knot, ozsvath2003heegaard}. Applying Theorems \ref{Kweight} and \ref{KasteleynsTheorem}, the following algorithm shows how the determinant of a knot can be computed via a partition function on a BOT graph.

\begin{algorithm} \label{detalg}
Given a diagram $D$ for a knot $K$, one can compute $\textup{det}(K)$ as follows.
\begin{enumerate}
\item{Construct a weighted BOT graph $\Gamma_D$ with weighting $\mu$ coming from the Dehn coloring conditions at each crossing. Call this the {\it Dehn weighting}.  Figure \ref{Dehnweight} shows a local weighting where double edges indicate a -1 weight.   
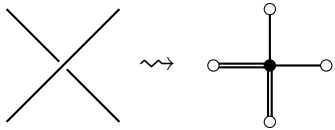
\begin{figure}[h]
\begin{tikzpicture}
[scale=.5]

\draw[style=thick] (0,0) to[out=45,in=-135] (3,3);
\draw[style=thick] (3,0) to[out=135, in=-45] (1.6,1.4);
\draw[style=thick] (0,3) to[out=-45, in=135] (1.4,1.6);

\node at (4,1.5) {\large{$\leadsto$}};

\draw[style=thick, double] (7,0) -- (7,1.5);
\draw[style=thick] (7,1.5) -- (7,3);
\draw[style=thick, double] (5.5, 1.5)--(7, 1.5);
\draw[style=thick] (7, 1.5)--(8.5, 1.5);

\draw[radius=.15, fill=black](7,1.5)circle;
\draw[radius=.15, fill=white](5.5,1.5)circle;
\draw[radius=.15, fill=white](8.5,1.5)circle;
\draw[radius=.15, fill=white](7,0)circle;
\draw[radius=.15, fill=white](7,3)circle;

\end{tikzpicture}
\caption{A local Dehn weighting}\label{Dehnweight}
\end{figure}}
\item{Fix a Kasteleyn weighting $\epsilon$ on $\Gamma_D$. (For alternating knots, Lemma \ref{DehnKast} shows there is a convenient choice.)}
\item{Compute $\textup{det}(K)$ via the following formula.  $$\textup{det}(K) = Z(\Gamma'_D)= \sum_{m\in \mathcal{M}} \prod_{e\in m} \epsilon(e)\mu(e)$$}
\end{enumerate}
\end{algorithm}

Alternating diagrams have a particularly natural Kasteleyn weighting as demonstrated by the following lemma. Even if one has a nonalternating knot,  we can apply this result to get a Kasteleyn weighting by considering an alternating knot with the same shadow.

\begin{lemma}\label{DehnKast}
The Dehn weighting on a BOT graph for an alternating diagram is a Kasteleyn weighting.
\end{lemma}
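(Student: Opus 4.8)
The plan is to verify the Kasteleyn condition face-by-face on the BOT graph $\Gamma_D$, using the fact that alternating diagrams impose a rigid local structure on how crossings and faces interleave. Recall that a Kasteleyn weighting requires each bounded face of the plane graph to have an odd number of $-1$ edges when the face has $0 \pmod 4$ edges, and an even number when it has $2 \pmod 4$ edges. Since every vertex of $V$ (a crossing) has degree four in the BOT graph and the graph is bipartite, every bounded face of $\Gamma_D$ is a cycle of even length, alternating between crossing-vertices in $V$ and face-vertices in $W$. So the first step is to understand the possible bounded faces of $\Gamma_D$ and, in particular, to relate the number of edges of such a face to the local alternating structure of $D$.

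First I would fix the Dehn weighting $\mu$ described in Figure \ref{Dehnweight}: at each crossing, the two edges running to the pair of faces meeting along the overstrand receive weight $-1$ and the other two receive weight $+1$ (this is exactly the sign pattern forced by the relation $a + b \equiv c + d$ in Figure \ref{coloringrels}). The key observation to establish is that in an alternating diagram, as one walks around a bounded face of $\Gamma_D$, the overstrand/understrand roles of the crossings alternate in a controlled way. A bounded face of $\Gamma_D$ corresponds to a face-vertex $y \in W$ together with the cycle of crossings and intermediate faces surrounding it, or more precisely to a small region of the overlaid Tait structure. I would trace how the $-1$ edges (those incident to overstrands) are distributed around each such bounded face and count their parity.

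The crux is a parity bookkeeping argument: I expect to show that for each bounded face of $\Gamma_D$ the number of incident $-1$ edges has exactly the parity demanded by the face's edge-count mod $4$, and that the alternating hypothesis is precisely what forces this. Concretely, the alternating condition means that along any strand the crossings alternate over/under, so the positions at which a bounded face of $\Gamma_D$ touches an overstrand are spaced in a regular pattern around the face. Translating ``number of edges of the face'' into ``number of crossings bordering the corresponding diagrammatic region'' via the BOT construction, I would argue that half the edges of each bounded face are $-1$ edges, and then check that $\tfrac{1}{2}(\text{edges})$ has the correct parity against the edge-count mod $4$ in both cases.

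The main obstacle I anticipate is the careful combinatorial description of the bounded faces of the BOT graph and correctly correlating each face's edge count with the local diagram data. Because the BOT graph superimposes both Tait graphs and inserts a vertex at each crossing, its faces do not correspond transparently to faces of $D$, so I would need to describe them explicitly (likely via a small case analysis of the local picture at adjacent crossings along a strand) before the parity count becomes routine. Once that dictionary between BOT-graph faces and the alternating local structure is pinned down, verifying the $0 \pmod 4$ and $2 \pmod 4$ conditions should reduce to a short parity computation, and invoking Theorem \ref{Kweight} is unnecessary since we are exhibiting a specific weighting rather than merely asserting existence.
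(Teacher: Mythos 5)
There is a genuine gap, and it sits exactly where you flagged your ``main obstacle.'' The structural fact you need but never establish is that \emph{every} bounded face of the BOT graph is a quadrilateral: it consists of two crossings $c_1, c_2$ that are consecutive along a strand of $D$, together with the two diagram faces $f_1, f_2$ flanking the arc segment joining them. Once you know the faces are all $4$-gons, the Kasteleyn condition reduces to a single statement: each such square must carry an odd number (one or three) of $-1$ edges. This is the form the paper's proof takes, and the case analysis you defer is not ``routine'' bookkeeping --- it is the entire proof.

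More seriously, the parity count you anticipate is wrong. You propose to show that ``half the edges of each bounded face are $-1$ edges''; on a $4$-gon that would be two $-1$ edges, which is \emph{even}, and a face with $0 \pmod 4$ edges needs an \emph{odd} number of $-1$'s --- so your predicted count would disprove the lemma rather than prove it. The correct mechanism, and the precise place the alternating hypothesis enters, is this: the Dehn relation $a+b\equiv c+d$ assigns equal weights to the two faces separated by an overstrand segment at a crossing and opposite weights to the two faces separated by an understrand segment. For the quadrilateral face spanning the segment between $c_1$ and $c_2$, alternation forces that segment to be the overstrand at one end and the understrand at the other; hence the two edges at one crossing agree in sign (contributing $0$ or $2$ negatives) while the two at the other crossing disagree (contributing exactly $1$), for a total of one or three $-1$ edges. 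Your overall strategy (exhibit the weighting, verify the condition face-by-face, no appeal to Theorem \ref{Kweight}) matches the paper's, but as written the argument both omits the needed description of the BOT faces and commits to a parity claim that fails.
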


\begin{proof}
Every bounded face of the BOT graph is a square, so in order to have a Kasteleyn weighting each bounded face should have one or three edges with weight -1. If a diagram is alternating, each bounded face in the BOT graph comes from an alternating pair of crossings as shown below. 

\begin{figure}[h]
\begin{tikzpicture}
[scale=.6]

\draw[style=thick] (-1,0) to[out=0,in=180] (1.45,0);
\draw[style=thick] (1.75,0) to[out=0, in=180] (2.5,0);
\draw[style=thick] (1.6,1) to[out=270, in=90] (1.6, -1);
\draw[style=thick] (.25,1) to[out=270, in=90] (.25, .15);
\draw[style=thick] (.25,-.15) to[out=270, in=90] (.25, -1);

\node at (3.5,0) {\huge{$\leadsto$}};

\draw[style=thick] (4.5,-.4) to[out=45, in=-135] (5.75, .75);
\draw[style=thick] (5.75,-.75) to[out=45, in=-135] (7, .5);
\draw[style=thick] (7,-.5) to[out=135, in=-45] (5.75, .75);
\draw[style=thick] (5.75,-.75) to[out=135, in=-45] (4.5, .5);

\draw[radius=.1, fill=black](5,0)circle;
\draw[radius=.1, fill=black](6.5,0)circle;
\draw[radius=.1, fill=white](5.75,.75)circle;
\draw[radius=.1, fill=white](5.75,-.75)circle;


\end{tikzpicture}
\end{figure}

Each edge of the square face shown will have a weight of either 1 or -1. The Dehn coloring relations require that the two lefthand edges have the same weight while the two righthand edges have opposite weights. With those constraints, the square face will always have exactly one or three edges with weight -1. 
\end{proof}

Lemma \ref{DehnKast} implies that the number of perfect matchings of a BOT graph for an alternating diagram is the determinant of the associated knot. As we discuss afterwards, this is a classical result about the number of spanning trees of the Tait graph of an alternating diagram reworded in the language of the BOT graph  \cite{crowell1959nonalternating}.

\begin{corollary}\label{dehnkastcor}
Let $\Gamma_D$ be a BOT graph for an alternating diagram $D$ of a knot $K$. Then $\textup{det}(K)$ is the number of dimer coverings on $\Gamma_D$.
\end{corollary}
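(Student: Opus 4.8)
The plan is to assemble ingredients already in hand: the identification of the Dehn-weighted BOT graph with a presentation matrix (Lemma~\ref{matdimer}), the fact that for an alternating diagram the Dehn weighting is itself Kasteleyn (Lemma~\ref{DehnKast}), and Kasteleyn's Theorem~\ref{KasteleynsTheorem}. The one idea that makes everything collapse is to run Kasteleyn's theorem with the Kasteleyn weighting $\epsilon$ chosen to be the Dehn weighting $\mu$ itself, so that the signed partition function degenerates into a plain count of dimer coverings.

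First I would recall that $\det(K) = \lvert\det(\widetilde{M}(D))\rvert$: since $\widetilde{M}(D)$ is a square presentation matrix for the torsion submodule $T(\widetilde{\textup{Col}}(K))$, the product of its invariant factors, equivalently $\lvert\det(\widetilde{M}(D))\rvert$, is the order of that group, which is exactly $\det(K)$. By Lemma~\ref{matdimer}, the BOT graph $\Gamma_D$ equipped with the Dehn weighting $\mu$ is precisely the graph $\Gamma_{\widetilde{M}(D)}$, whose weight matrix agrees with $\widetilde{M}(D)$ up to transposition and rearrangement of rows and columns; in particular the two have the same determinant up to sign.

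Next, since $D$ is alternating, Lemma~\ref{DehnKast} shows that the Dehn weighting $\mu$, which takes values in $\{\pm1\}$, is a Kasteleyn weighting. I would therefore apply Theorem~\ref{KasteleynsTheorem} to $\Gamma_D$ taking the Kasteleyn weighting to be $\epsilon = \mu$. The reweighted graph $\Gamma'_D = (V,W,E,\mu\cdot\epsilon)$ then carries edge weights $\mu(e)\epsilon(e) = \mu(e)^2 = 1$ for every edge, so that
\[
Z(\Gamma'_D) = \sum_{m\in\mathcal{M}}\prod_{e\in m}\mu(e)\epsilon(e) = \sum_{m\in\mathcal{M}}\prod_{e\in m}1 = \lvert\mathcal{M}\rvert
\]
is precisely the number of dimer coverings of $\Gamma_D$. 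On the other hand, Theorem~\ref{KasteleynsTheorem} gives $Z(\Gamma'_D) = \pm\det(\widetilde{M}(D)) = \pm\det(K)$. Comparing the two expressions, and using that $\lvert\mathcal{M}\rvert\ge 0$ while $\det(K)>0$, the sign is forced to be positive, whence the number of dimer coverings equals $\det(K)$.

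I do not expect a serious obstacle here; the corollary essentially repackages Algorithm~\ref{detalg} in the alternating case, where Lemma~\ref{DehnKast} removes the need to introduce a separate Kasteleyn weighting. The only point requiring care is the sign bookkeeping: confirming that the choice $\epsilon = \mu$ is legitimate (it is, by Lemma~\ref{DehnKast}) and that the collapse $\mu^2\equiv 1$ genuinely converts the signed partition function into the unsigned perfect-matching count, so that it matches the nonnegative quantity $\det(K)$ and not its negative. I would also double-check that the convention identifying $Z$ of the trivially weighted graph with the number of dimer coverings is exactly the one invoked, so the final identification is unambiguous.
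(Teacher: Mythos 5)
Your proposal is correct and follows the paper's own proof essentially verbatim: both invoke Lemma~\ref{matdimer} to identify $\Gamma_D$ with $\Gamma_{\widetilde{M}(D)}$, Lemma~\ref{DehnKast} to take $\epsilon=\mu$, and Kasteleyn's Theorem~\ref{KasteleynsTheorem} to collapse $Z(\Gamma'_D)=\sum_{m}\prod_{e}\mu(e)^2=|\mathcal{M}|$ while equating it with $|\det(\widetilde{M}(D))|=\det(K)$. Your extra remark pinning down the sign via nonnegativity of both quantities is a small refinement the paper leaves implicit.
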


\begin{proof}
Let $\widetilde{M}(D)$ be a matrix for $T(\widetilde{\textup{Col}}(D))$. Lemma \ref{matdimer} showed that $\Gamma_{\widetilde{M}(D)} = \Gamma_D$ is a BOT graph for $D$. Lemma \ref{DehnKast} showed that the Dehn weighting $\mu:E \rightarrow \{\pm1\}$ is a Kasteleyn weighting. Applying Kasteleyn's Theorem \ref{KasteleynsTheorem}, we see that $$\textup{det}(K) = |\textup{det}(\widetilde{M}(D))| = Z\left(\Gamma'_D\right) = \sum_{m \, \in \, \mathcal{M}} \prod_{e \, \in \, m} (\mu(e))^2 = \sum_{m \, \in \, \mathcal{M}} \prod_{e \, \in \, m} 1 = |\mathcal{M}|.$$
\end{proof}

In Formal Knot Theory \cite{kauffman2006formal}, Kauffman discusses the one-to-one correspondence between marked states of the link diagram and rooted spanning trees of the Tait graph (paired with rooted spanning trees of the dual to the Tait graph). There is an obvious one-to-one correspondence between Kauffman's marked states and elements of $\mathcal{M}$ which is discussed in \cite{CDR}. The reader can also see \cite{cohen2012determinant} for more on correspondences between these objects. Figure \ref{correspondence} shows a corresponding marked diagram, perfect matching, and spanning tree.
\begin{figure}[h]
\begin{tikzpicture}[scale=.6]

\draw [thick] (-.1,.1) to[out=100,in=150] (4,1) to[out=-30, in=60] (4.1,-.9);
\draw [thick] (4,-1.1) to[out=240,in=240](0,0) to[out=50, in=145] (1.9,.1);
\draw [thick] (2.1,-.1) to[out=-45, in=170] (4,-1) to[out=-10, in=270] (5.5,0) to[out=90, in=10] (4.2,1);
\draw [thick] (3.9,.9) to[out=190, in=45] (2,0) to[out=225, in=320] (0,-.2);
\filldraw[style=thick, color=black](4.05,1)--(4.2,.82)--(4.4, 1)--(4,1);
\filldraw[style=thick, color=black](4.05,-1)--(3.8,-1.3)--(3.75,-.95)--(3.95,-1);
\filldraw[style=thick, color=black](2,0)--(2.2,.2)--(2.2, -.2)--(2,0);
\filldraw[style=thick, color=black](-.07,-.17)--(.15,.12)--(.15, -.27)--(-.07,-.17);

\node at (1.8,.75) {\Large{*}};

\end{tikzpicture} \hspace{.17in}
\raisebox{25pt}{\begin{tikzpicture}[scale=.6]
\draw[color=gray] (-.1,0)--(.75, -1);
\draw[style=ultra thick] (0,0)--(.8,0);
\draw[color=gray] (2,0)--(.8,0);
\draw[style=ultra thick] (4,1)--(5.15,0);
\draw[color=gray] (2,0)--(.75, -1);
\draw[color=gray,] (4,-1)--(5.15,0);
\draw[style=ultra thick] (4,-1)--(.75, -1);
\draw[color=gray] (4,1)--(3.5, 0);
\draw[color=gray] (4,-1)--(3.5,0);
\draw[style=ultra thick] (2,0)--(3.5,0);

\draw[fill=black,radius=.15] (0,0)circle;
\draw[fill=black, radius=.15] (4,-1)circle;
\draw[fill=black, radius=.15] (2,0)circle;
\draw[fill=black, radius=.15] (4,1)circle;

\draw[fill=white, radius=.15] (.75,0)circle;
\draw[fill=white, radius=.15] (3.5,0)circle;
\draw[fill=white, radius=.15] (.75,-1)circle;
\draw[fill=white, radius=.15] (5.15,0)circle;
\end{tikzpicture}}
\hspace{.07in}
\begin{tikzpicture}[scale=.6]

\draw [color=gray] (-.1,.1) to[out=100,in=150] (4,1) to[out=-30, in=60] (4.1,-.9);
\draw [ color=gray] (4,-1.1) to[out=240,in=240](0,0) to[out=50, in=145] (1.9,.1);
\draw [ color=gray] (2.1,-.1) to[out=-45, in=170] (4,-1) to[out=-10, in=270] (5.5,0) to[out=90, in=10] (4.2,1);
\draw [ color=gray] (3.9,.9) to[out=190, in=45] (2,0) to[out=225, in=320] (0,-.2);

\draw[style=ultra thick] (1.8,.9) to[out=20, in=165](4,.97) to[out=-15, in=90] (5.15,0);
\draw[style=ultra thick](5.15,0) to[out=-90,in=20] (4,-1) to[out=200, in=-10] (.75, -1);
\draw[fill=black, radius=.15] (.75,-1)circle;
\draw[fill=black, radius=.15] (5.15,0)circle;

\draw[style=ultra thick] (-1,0)--(3.5,0);

\draw[fill=white, radius=.15] (3.5,0)circle;
\draw[fill=white, radius=.15] (1,0)circle;

\draw[fill=white, white, radius=.25] (-1.05,0)circle;
\draw[fill=white, white, radius=.25] (2,.72)circle;

\node at (-1,-.15) {\Large{*}};
\node at (1.8,.75) {\Large{*}};

\end{tikzpicture}
\caption{A corresponding marked diagram, perfect matching, and spanning tree}\label{correspondence}
\end{figure}
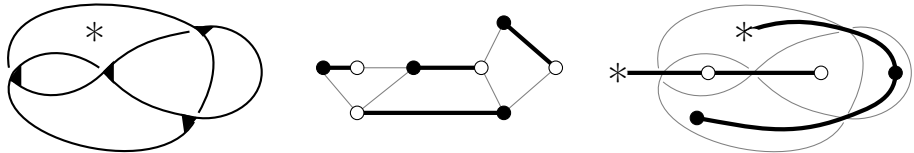

So far we have encoded the Dehn torsion matrix as a weighted graph and computed its determinant via a partition function. The final goal is to compute the Smith normal form of the knot from this perspective. The graph theoretic analog of a matrix minor is an induced subgraph. Removing rows and columns of a matrix corresponds to removing vertices in the associated graph.  We give a general algorithm for computing $\textup{SNF}(K)$ and then focus on cases where this algorithm can be simplified.

\begin{algorithm}\label{SNFalg}
Consider a $k$-crossing diagram $D$ for a knot $K$. Compute $\textup{SNF}(K)$ as follows.
\begin{enumerate}
\item{ Construct a Dehn weighted BOT graph $\Gamma_D = (V, W, E, \mu)$ as in Algorithm \ref{detalg}. }
\item{Beginning with $i=k$, compute $d_i$ using the following steps. For each pair of subsets $V'\subset V$ and $W'\subset W$ with $|V'|=|W'| = i$, consider the induced subgraph $\Gamma_D^{V'W'}$ of the BOT graph spanned by $V'\cup W'$. Fix a Kasteleyn weighting $\epsilon_{V'W'}$ on $\Gamma_D^{V'W'}$. Using the following formula, compute $d_{V'W'}$ where $\mathcal{M}_{V'W'}$ is the set of all dimer coverings of $\Gamma_D^{V'W'}$.   $$d_{V'W'} = \sum_{m\in \mathcal{M}_{V'W'}} \prod_{e\in m} \epsilon_{V'W'}(e)\mu(e)$$ Then $d_i$ can be computed as  $d_i = \gcd \{ d_{V'W'}: |V'|=|W'|=i \}.$ If $d_i = 1$, then $d_j = 1$ for all $1\leq j<i$. If $d_i \neq 1$, then decrement $i$ by one and repeat the above computation.}  
\item{Finally, we have $s_i = \frac{d_i}{d_{i-1}}$, and $(s_1, s_2, \ldots, s_k)$ determines $\textup{SNF}(K)$.}
\end{enumerate}
\end{algorithm}

In Algorithm \ref{SNFalg}, for each subset $V'\cup W'$ of the vertex set of $\Gamma_D$, we must determine a new Kasteleyn weighting on the induced subgraph $\Gamma_D^{V'W'}$. In general this is tedious, but there are certain cases where computations are easier. For example, when one removes a pair of adjacent vertices (and all edges incident to them) from $\Gamma_D$, any Kasteleyn weighting on $\Gamma_D$ restricted to the remaining subgraph is still a Kasteleyn weighting.

\begin{lemma}
Let $\Gamma = (V,W, E)$ be a plane bipartite graph equipped with a Kasteleyn weighting $\epsilon$. Let $v\in V$ and $w\in W$ be a pair of adjacent vertices in $\Gamma$. Then $\epsilon$ restricts to a Kasteleyn weighting on the subgraph of $\Gamma$ obtained by removing $v$ and $w$.
\end{lemma}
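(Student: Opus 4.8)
The plan is to verify the Kasteleyn condition face-by-face on the subgraph $\Gamma' = \Gamma - \{v,w\}$, using a standard reformulation of the condition in terms of \emph{enclosed vertices}. First I would record the following fact: if $\epsilon$ is a Kasteleyn weighting on a plane bipartite graph and $C$ is a simple cycle of length $2\ell$ enclosing $N$ vertices of the graph in its interior, then $\prod_{e \in C}\epsilon(e) = (-1)^{\ell + 1 + N}$. This follows by writing the product over $C$ as the product of the face products of all faces enclosed by $C$ (each interior edge appears twice and cancels), substituting the face condition $\prod_{e\in\partial f}\epsilon(e)=(-1)^{(\lvert\partial f\rvert/2)+1}$, and reducing the resulting parities with Euler's formula for the disk bounded by $C$. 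For $N=0$ this recovers exactly the face condition, so proving the lemma amounts to controlling the parity of $N$ for the faces of $\Gamma'$.

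Next I would analyze the bounded faces of $\Gamma'$. Since $\Gamma'$ is obtained only by deleting $v$, $w$, and their incident edges, every edge of $\Gamma'$ is also an edge of $\Gamma$; hence the boundary $\partial F$ of any bounded face $F$ of $\Gamma'$ is likewise a cycle of $\Gamma$. Moreover, the only vertices of $\Gamma$ absent from $\Gamma'$ are $v$ and $w$, so the interior of $F$ contains no vertices of $\Gamma$ other than possibly $v$ and $w$. Applying the enclosed-vertex formula to $C = \partial F$ with $2\ell = \lvert\partial F\rvert$, the Kasteleyn condition for $F$ in $\Gamma'$ holds precisely when the number $N$ of removed vertices enclosed by $\partial F$ is even.

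The crucial step, and the one place where the adjacency hypothesis enters, is to show $N$ is always even, i.e.\ $N \in \{0,2\}$ and never $1$. Here I would argue topologically: the points $v$, $w$, together with the open arc of the deleted edge $vw$, form a connected subset of the plane disjoint from $\Gamma'$, since all three are removed and that arc meets $\Gamma'$ only at its (removed) endpoints. A connected set lies in a single complementary region, so any bounded face of $\Gamma'$ contains in its interior either both of $v,w$ or neither. Combined with the previous paragraph, $N \in \{0,2\}$ for every bounded face, the parity is never disturbed, and $\epsilon$ restricts to a Kasteleyn weighting on $\Gamma'$.

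I expect the main obstacle to be bookkeeping rather than conceptual: establishing the enclosed-vertex formula cleanly (the Euler-characteristic parity computation) and correctly handling faces whose boundary is a closed walk rather than a simple cycle when $\Gamma'$ fails to be $2$-connected. In that case one replaces ``enclosed vertices'' by a count with multiplicity and reruns the same parity argument; adjacency of $v$ and $w$ still forces them to lie in a common complementary region, so the even-parity conclusion persists.
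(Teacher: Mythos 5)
Your proof is correct, but it takes a genuinely different route from the paper's. The paper argues locally: it enumerates the $l$ faces of $\Gamma$ incident to $v$ or $w$, notes each is a (bounded) square, splits the $-1$ weights on each into those on edges touching $\{v,w\}$ and those not, observes that the former sum is even because each such edge lies on two of the listed faces, and concludes that the $\sum z_i$ many $-1$'s left on the single new face of size $2l-2$ have exactly the parity the Kasteleyn condition demands. You instead invoke the global reformulation of the Kasteleyn condition --- $\prod_{e\in C}\epsilon(e)=(-1)^{\ell+1+N}$ for a cycle $C$ of length $2\ell$ enclosing $N$ vertices --- and reduce the lemma to the purely topological observation that $v$, $w$, and the open arc of the edge $vw$ form a connected set disjoint from $\Gamma-\{v,w\}$, hence lie in a single face, so every bounded face of the subgraph encloses $0$ or $2$ deleted vertices. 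Your version isolates exactly where adjacency is used and does not need the faces around $v$ and $w$ to be squares, so it proves the lemma at the generality at which it is stated (the paper's proof silently assumes square faces, which holds for BOT graphs where the lemma is applied, and that the faces around $\{v,w\}$ merge into one new $(2l-2)$-gon). The price is that you must establish the enclosed-vertex formula via the Euler-characteristic computation, and both your argument and the paper's share the caveat about face boundaries that are closed walks rather than simple cycles when the subgraph fails to be $2$-connected --- you at least flag this explicitly.
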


\begin{proof}

If the edge $vw$ or one of the vertices $v$ or $w$ is incident to the unbounded face, then removing all edges connected to $v$ and $w$ will not introduce new bounded faces. Hence $\epsilon$ will restrict to a Kasteleyn weighting of the subgraph induced by all vertices except $v$ and $w$. 

Now assume that neither the edge $vw$ nor the vertices $v$ or $w$ are incident to the unbounded face. Say that there are $l$ faces enumerated $f_1, \ldots, f_l$ that contain $v$ or $w$ (or both) as part of their boundaries. Each of these is bounded and square. Let $y_i$ be the number of $-1$ weights on edges of $f_i$ incident to $v$ or $w$. Let $z_i$ be the number of $-1$ weights on edges of $f_i$ not incident to $v$ or $w$. 

Since $\epsilon$ is a Kasteleyn weighting and all faces are square, it follows that $y_i+z_i$ is odd for all $i$. Consider $\sum_{i=1}^l (y_i+z_i) = \sum_{i=1}^l y_i + \sum_{i=1}^l z_i$. The sum $\sum_{i=1}^l y_i$ is always even since each edge adjacent to $v$ or $w$ is part of two faces in the list $f_1, \ldots, f_l$. Since $y_i+z_i$ is odd for all $i$, the entire sum has the same parity as $l$. We conclude that  $\sum_{i=1}^l z_i$ has the same parity as $l$.   

Consider the subgraph of $\Gamma$ obtained by removing $v$, $w$, and any edges incident to $v$ and $w$. Any bounded face of $\Gamma$  not containing $v$ or $w$ persists in this subgraph, and thus $\epsilon$ satisfies the Kasteleyn condition on each of these faces. There is one new face created when $v$ and $w$ are removed which has $2l-2$ edges since removing $v$ and $w$ deletes 3 edges from 2 faces and 2 edges from the other $l-2$ faces with $v$ or $w$ in their boundaries. If $l$ is odd, then $2l-2 \equiv 0 (\textup{mod } 4)$. If $l$ is even, then $2l-2 \equiv 2 (\textup{mod } 4)$. The number of -1 weights around the new face is $\sum_{i=1}^l z_i$ which matches the parity of $l$. This means that the Kasteleyn condition is satisfied on the only face that has been modified by deleting $v$ and $w$. Hence $\epsilon$ restricts to a Kasteleyn weighting on this induced subgraph.
\end{proof}

There is another observation that can be useful when computing minors graph theoretically. This gives conditions under which a weighted edge can be added to a graph while maintaining the Kasteleyn weighting. As a corollary we can compute certain minors without modifying Kasteleyn weightings.

\begin{lemma} \label{oliverslemma}
Let $\Gamma = (V,W, E)$ be a plane bipartite graph equipped with a Kasteleyn weighting $\epsilon$. Say $v\in V$ and $w\in W$ with $vw\notin E$. If the edge $vw$ can be added to $\Gamma$ while maintaining planarity of its embedding, then $\epsilon$ extends to a Kasteleyn weighting on $\Gamma\cup vw$. 
\end{lemma}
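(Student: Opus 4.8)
The plan is to add the new edge $vw$ to $\Gamma$, obtaining a plane graph $\Gamma \cup vw$, and to define a weighting that agrees with $\epsilon$ on all of the original edges while assigning a carefully chosen value $\varepsilon_0 \in \{\pm 1\}$ to the new edge $vw$. The key observation is that adding a single edge between two vertices of a plane graph, in a way that preserves planarity, splits exactly one existing face $f$ of $\Gamma$ into two new faces $f'$ and $f''$ while leaving every other face unchanged. Thus the Kasteleyn condition is automatically satisfied on every face except $f'$ and $f''$, since those faces already satisfied it in $\Gamma$ and their boundary edges and weights are unchanged. The entire problem therefore reduces to choosing $\varepsilon_0$ so that the Kasteleyn condition holds simultaneously on $f'$ and $f''$.

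First I would set up the bookkeeping for the face $f$ that gets subdivided. The edge $vw$ is a chord of $f$, so the boundary of $f$ is partitioned into two arcs, one forming part of $\partial f'$ and the other part of $\partial f''$, with the new edge $vw$ lying on the boundary of both $f'$ and $f''$. Let $a'$ and $a''$ denote the number of edges on these two boundary arcs (not counting $vw$), so that $f'$ has $a'+1$ edges and $f''$ has $a''+1$ edges, and the original face $f$ has $a'+a''$ edges. Let $n'$ and $n''$ denote the number of $-1$ weights (under $\epsilon$) on the two arcs respectively, so the number of $-1$ weights on $\partial f$ is $n'+n''$. After adding $vw$ with weight $\varepsilon_0$, the face $f'$ has $n' + [\varepsilon_0 = -1]$ many $-1$ edges and $f''$ has $n'' + [\varepsilon_0 = -1]$ many.

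The main step is a parity computation. Because $\Gamma$ is bipartite with all faces of even length, every face has an even number of edges, so $a'+a''$ is even, which forces $a'$ and $a''$ to have the same parity; consequently $f'$ and $f''$ have edge counts $a'+1$ and $a''+1$ of the same parity. I would analyze the two cases according to whether $a'+a''$ is $0$ or $2 \pmod 4$, using that $\epsilon$ satisfies the Kasteleyn condition on $f$, i.e. that $n'+n''$ has the parity dictated by $a'+a''$. A short case check shows that in each case exactly one choice of $\varepsilon_0$ makes the $-1$ counts on both $f'$ and $f''$ correct simultaneously; the point is that adding $vw$ shifts the edge counts of $f'$ and $f''$ into the opposite residue class mod $4$ relative to $f$ in a coordinated way, and the single new $-1$ (or its absence) compensates identically for both faces precisely because $a'$ and $a''$ share a parity.

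I expect the main obstacle to be verifying that a \emph{single} choice of $\varepsilon_0$ works for \emph{both} new faces at once, rather than needing different signs on $f'$ and $f''$ (which would be impossible, as $vw$ carries one weight). The resolution hinges on the parity coupling forced by bipartiteness: since $a' \equiv a'' \pmod 2$, the transitions from $f$ to $f'$ and from $f$ to $f''$ are parallel, so the same $\varepsilon_0$ fixes both. I would also briefly note the degenerate case where the added edge is incident to the unbounded face (so one of the ``new faces'' is the outer region and imposes no condition), in which the argument only simplifies. Assembling these, $\epsilon$ extended by $\varepsilon_0$ is the desired Kasteleyn weighting on $\Gamma \cup vw$.
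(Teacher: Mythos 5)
Your proposal is correct and follows essentially the same route as the paper's proof: identify the single face $f$ split by the new chord, track the $-1$ counts on the two boundary arcs, and use the Kasteleyn condition on $f$ together with a mod-4 parity analysis to show that one choice of sign on $vw$ satisfies both new faces simultaneously (the paper phrases this as ``either both new faces or neither'' already have the correct count). The only cosmetic difference is your bookkeeping via arc lengths $a', a''$ versus the paper's face sizes $2m$ and $2(l-m+1)$.
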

\begin{proof}
The addition of $vw$ to $\Gamma$ will split some face $f$ into two smaller faces. If $f$ has $2l$ edges, then $l>2$ (since we should not obtain a multigraph by adding $vw$) and $vw$ splits $f$ into faces of size $2m$ and $2(l-m+1)$ for some $1<m<l$.

If $l$ is even, then $2l\equiv 0 (\textup{mod }4)$, and $\epsilon$ assigns an odd number of $-1$ weights to the edges of $f$. Note that  $l$ even implies that $m$ and $l-m+1$ have opposite parity. Therefore, when $vw$ is added, one new face will have an even number of $-1$ weights and the other will have an odd number. 

If $l$ is odd, then $2l \equiv 2 (\textup{mod }4)$, and $\epsilon$ assigns an odd number of $-1$ weights to the edges of $F$. For $l$ odd, note that $m$ and $l-m+1$ have the same parity. When $vw$ is added to $\Gamma$, either both new faces will have an odd number of $-1$ weights or both will have an even number. 

In any of the above cases, it follows that either both new faces or neither of the new faces will have the correct number of $-1$ signs needed for a Kasteleyn weighting. If no additional $-1$ weights are needed, set $\epsilon(vw)=1$. Otherwise, setting $\epsilon(vw)=-1$ will yield a Kasteleyn weighting on $\Gamma\cup vw$. 
\end{proof}

\begin{corollary} \label{removal}
Let $\Gamma=(V,W,E)$ be a planar bipartite graph with Kasteleyn weighting $\epsilon$ and nonadjacent vertices $v\in V$, $w\in W$ lying on a common face. Then $\epsilon$ restricts  to a Kasteleyn weighting on $\Gamma-\{v,w\}$.
\end{corollary}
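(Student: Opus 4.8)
The plan is to reduce this corollary to the two preceding lemmas by first \emph{adding} the edge $vw$ and then \emph{deleting} the pair $v,w$. The starting observation is that the hypotheses let us add $vw$ planarly: since $v$ and $w$ are nonadjacent we have $vw\notin E$, and since they lie on a common face $f$ the edge $vw$ can be routed through the interior of $f$ without creating a crossing. I would also note that nonadjacency forces $f$ to have at least six edges, because every vertex of $V$ on a bipartite quadrilateral is adjacent to every vertex of $W$ on it; thus $v$ and $w$ cannot both lie on a $4$-cycle face. Consequently the face being subdivided has length $2l$ with $l>2$, which is exactly the regime in which Lemma \ref{oliverslemma} applies and in which no multigraph is produced.

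With these checks in place, Lemma \ref{oliverslemma} extends $\epsilon$ to a Kasteleyn weighting $\epsilon'$ on $\Gamma\cup vw$, and by construction $\epsilon'$ agrees with $\epsilon$ on every edge of $E$. Now $v\in V$ and $w\in W$ are adjacent in $\Gamma\cup vw$, so the preceding lemma on deleting a pair of adjacent vertices applies directly: $\epsilon'$ restricts to a Kasteleyn weighting on $(\Gamma\cup vw)-\{v,w\}$. The final bookkeeping step is to identify $(\Gamma\cup vw)-\{v,w\}$ with $\Gamma-\{v,w\}$. Deleting $v$ and $w$ removes the added edge $vw$ as well, so the two graphs coincide as plane graphs, and every surviving edge already lies in $E$, where $\epsilon'=\epsilon$. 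Hence the restriction of $\epsilon'$ is literally the restriction of $\epsilon$, and the latter is therefore a Kasteleyn weighting on $\Gamma-\{v,w\}$, as claimed.

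I do not expect a genuine obstacle here, since the entire argument is a matter of lining up the hypotheses of the two lemmas. The only points demanding a moment's care are verifying that the common face is large enough for Lemma \ref{oliverslemma} to apply (i.e. that nonadjacency rules out a quadrilateral face) and that the edge $vw$ can indeed be drawn inside that face while preserving the planar embedding; both follow immediately from the bipartite structure and the planarity assumption.
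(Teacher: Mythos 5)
Your proof is correct and follows exactly the route the paper intends for this corollary (which it leaves implicit): extend $\epsilon$ across the new edge $vw$ via Lemma \ref{oliverslemma}, delete the now-adjacent pair $v,w$ using the preceding lemma, and observe that the resulting graph and weighting are just $\Gamma-\{v,w\}$ with $\epsilon$ restricted. Your added check that nonadjacency rules out a quadrilateral face, so the subdivided face has $l>2$, is a worthwhile detail the paper glosses over.
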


Using these results, there are shortcuts for computing $\textup{SNF}(K)$. One begins the $\textup{SNF}$ computation by finding the $\textup{det}(K) = d_k$. Already if $d_k$ is prime or has no repeating prime factors, then by the definition of $\textup{SNF}$ we know $s_k = d_k = \textup{det}(K)$ and $s_i = 1$ for all $1\leq i <k$. In this case, the coloring torsion module is cyclic. With $\textup{det}(K)$ in hand, one can determine the possible Smith normal forms and hence the maximum number of iterations of Algorithm \ref{SNFalg} needed. Even when $\textup{det}(K)$ is composite with repeated factors, there are certain cases when we can determine that the coloring torsion module is cyclic without removing every possible pair of vertices. 

An edge $e$ in a graph $\Gamma$ is called a {\it forcing edge} if there is exactly one dimer covering of $\Gamma$ containing it \cite{ZZ}. More generally a subset $S$ of the vertex set of $\Gamma$ is called a forcing set if $S$ is contained in exactly one dimer covering of $\Gamma$. Using this language, the following theorem gives a graph theoretic sufficient condition for determining when the coloring torsion module is cyclic.

\begin{theorem}
If a BOT graph for a diagram $D$ of $K$ has a forcing edge, then $\textup{SNF}(K)$ has invariant factors $(1, \ldots, 1, \det{K})$ and the coloring torsion module is cyclic.
\end{theorem}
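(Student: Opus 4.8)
The plan is to show that the presence of a forcing edge forces the second-highest determinantal divisor $d_{k-1}$ to equal $1$, which immediately pins down the entire Smith normal form. Recall that the invariant factors are defined by $s_i = d_i/d_{i-1}$, where $d_i$ is the gcd of all $i\times i$ minors of $\widetilde{M}(D)$, and that the determinantal divisors satisfy the divisibility chain $d_1 \mid d_2 \mid \cdots \mid d_k$. Since there is a single $k\times k$ minor, $d_k = \lvert \det \widetilde{M}(D)\rvert = \det(K)$. Thus if we can establish $d_{k-1}=1$, then $d_i \mid d_{k-1}=1$ forces $d_i=1$ for all $i\leq k-1$, whence $s_i = 1$ for $i<k$ and $s_k = d_k/d_{k-1} = \det(K)$. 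This yields the claimed invariant factors $(1,\ldots,1,\det K)$ and hence the cyclic torsion module $\mathbb{Z}/\det(K)$.

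To compute $d_{k-1}$ graph theoretically, I would invoke Algorithm \ref{SNFalg}. Let $e=vw$ be the forcing edge of $\Gamma_D$, where necessarily $v\in V$ and $w\in W$ since the BOT graph is bipartite. Take $V' = V\setminus\{v\}$ and $W' = W\setminus\{w\}$, so that $\lvert V'\rvert = \lvert W'\rvert = k-1$, and consider the induced subgraph $\Gamma_D^{V'W'} = \Gamma_D - \{v,w\}$. The key observation is that deleting the two endpoints of an edge sets up a bijection between the dimer coverings of $\Gamma_D^{V'W'}$ and the dimer coverings of $\Gamma_D$ that contain $e$: given a covering $m$ of $\Gamma_D^{V'W'}$, the set $m\cup\{e\}$ is a covering of $\Gamma_D$ using $e$, and conversely any covering of $\Gamma_D$ containing $e$ restricts to a covering of $\Gamma_D^{V'W'}$. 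Because $e$ is a forcing edge, there is exactly one covering of $\Gamma_D$ containing $e$, so $\Gamma_D^{V'W'}$ has a unique dimer covering $m_0$.

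Now apply the minor formula from Algorithm \ref{SNFalg} to the pair $(V',W')$. Fixing any Kasteleyn weighting $\epsilon_{V'W'}$ on $\Gamma_D^{V'W'}$, the signed dimer count is a sum over the single covering $m_0$, so
$$d_{V'W'} = \prod_{e'\in m_0}\epsilon_{V'W'}(e')\,\mu(e').$$
Every factor lies in $\{\pm 1\}$, since both the Dehn weighting $\mu$ and the Kasteleyn weighting $\epsilon_{V'W'}$ take values in $\{\pm 1\}$; hence $d_{V'W'} = \pm 1$. As $d_{k-1} = \gcd\{ d_{V'W'} : \lvert V'\rvert = \lvert W'\rvert = k-1\}$ divides this particular unit, we conclude $d_{k-1}=1$, and the conclusion about the invariant factors follows as outlined above.

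The argument is essentially a translation exercise, so there is no deep obstacle; the one point requiring care is the bijection between the coverings of the vertex-deleted subgraph and the $e$-containing coverings of $\Gamma_D$, together with the observation that a single-term signed dimer sum is automatically a unit because all weights are $\pm 1$. Everything else follows formally from the divisibility chain among the determinantal divisors and the validity of Algorithm \ref{SNFalg}, which in turn rests on Kasteleyn's Theorem \ref{KasteleynsTheorem}.
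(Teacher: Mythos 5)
Your argument is correct and is exactly the route the paper sets up (the paper in fact omits a written proof of this theorem, but its Algorithm 5.3, the vertex-removal lemmas, and the $8_8$ example make clear this is the intended reasoning): deleting the endpoints of the forcing edge gives an induced subgraph with a unique dimer covering, whose signed weight is a product of $\pm 1$'s, so the corresponding $(k-1)\times(k-1)$ minor is a unit, forcing $d_{k-1}=1$ and hence $s_1=\cdots=s_{k-1}=1$, $s_k=\det(K)$. No gaps.
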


\begin{corollary}
If a BOT graph for an $k$-crossing diagram $D$ of $K$ has a forcing set of size $2m$, then $\textup{SNF}(K)$ has at most $k-m$ nontrivial factors.
\end{corollary}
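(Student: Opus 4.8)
The plan is to reduce the claim to a single statement about determinantal divisors: I want to exhibit one $m \times m$ minor of $\widetilde{M}(D)$ equal to $\pm 1$. Since the determinantal divisors satisfy $d_1 \mid d_2 \mid \cdots \mid d_k$ and $d_m$ is the gcd of all $m \times m$ minors, one such unit minor forces $d_m = 1$, hence $d_1 = \cdots = d_m = 1$ and $s_1 = \cdots = s_m = 1$. As the torsion module is finite all $s_i$ are positive, so the number of nontrivial invariant factors is $k$ minus the number of trivial ones; this gives at most $k - m$ nontrivial factors. By Lemma \ref{matdimer} and the minor--subgraph dictionary of Algorithm \ref{SNFalg}, the natural candidate is the minor indexed by the forcing set itself: writing $V_S = S \cap V$ and $W_S = S \cap W$, the induced subgraph $\Gamma_D^{V_S W_S}$ computes, up to sign, the minor keeping exactly the rows in $V_S$ and the columns in $W_S$.

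First I would record that $|V_S| = |W_S| = m$: because $\Gamma_D$ is bipartite, a dimer covering can match all of $S$ internally only when $S$ meets $V$ and $W$ equally, and the forcing hypothesis guarantees $S$ is matched internally in at least one covering. The crux is then to show that $\Gamma_D^{V_S W_S}$ has a \emph{unique} dimer covering. This should follow from the forcing condition via a factorization observation: any dimer covering of $\Gamma_D$ matching $S$ internally restricts to a covering of $\Gamma_D^{V_S W_S}$ on one side and a covering of $\Gamma_D - S$ on the other, and conversely any such pair glues back. Hence the number of internally-matching coverings of $\Gamma_D$ equals the product of the numbers of coverings of $\Gamma_D^{V_S W_S}$ and of $\Gamma_D - S$. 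The forcing hypothesis makes this product equal $1$, so each factor is $1$; in particular $\Gamma_D^{V_S W_S}$ has exactly one dimer covering.

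With uniqueness in hand the rest is the same Kasteleyn computation used for the determinant. I would invoke Theorem \ref{Kweight} to fix a Kasteleyn weighting $\epsilon$ on the plane bipartite graph $\Gamma_D^{V_S W_S}$ and apply Theorem \ref{KasteleynsTheorem} as in Algorithm \ref{SNFalg}: the corresponding minor equals $d_{V_S W_S} = \sum_{c \in \mathcal{M}_{V_S W_S}} \prod_{e \in c} \epsilon(e)\mu(e)$, a sum over the single surviving covering. Since every Dehn weight $\mu(e)$ and every Kasteleyn sign $\epsilon(e)$ lies in $\{\pm 1\}$, that one product is $\pm 1$, so this $m \times m$ minor is a unit, $d_m = 1$, and the bound follows.

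I expect the main obstacle to be the uniqueness step, namely arguing cleanly that the forcing property of $S$ descends to uniqueness of the dimer covering of the induced subgraph $\Gamma_D^{V_S W_S}$; the factorization of internally-matching coverings is the key device, and everything after it is routine. I would also remark that the same factorization shows $\Gamma_D - S$ has a unique covering, which by the dual argument gives $d_{k-m} = 1$ and hence the sharper bound of $\min(m, k-m)$ nontrivial factors, recovering the preceding theorem in the case $m = 1$.
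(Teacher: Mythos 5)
Your argument is correct, and it supplies a proof that the paper itself omits: both the forcing-edge theorem and this corollary are stated without justification, so there is no ``paper proof'' to match, only the evident intent of Algorithm \ref{SNFalg}. Your route is exactly that intended one: use Lemma \ref{matdimer} and the minor--induced-subgraph dictionary to identify the $m\times m$ minor on rows $S\cap V$ and columns $S\cap W$, show the induced subgraph has a unique dimer covering, and then apply Theorems \ref{Kweight} and \ref{KasteleynsTheorem} to conclude that minor is $\pm 1$, whence $d_m=1$ and $s_1=\cdots=s_m=1$. The factorization
$\#\{\text{coverings matching }S\text{ internally}\}=\#\mathcal{M}(\Gamma_D[S])\cdot\#\mathcal{M}(\Gamma_D-S)$
is the right device for the uniqueness step, and it is needed because the paper's definition of a forcing set (``a subset $S$ of the vertex set \dots contained in exactly one dimer covering'') is literally vacuous for vertices; your reading --- exactly one covering of $\Gamma_D$ matches $S$ internally --- is the one under which the stated bound $k-m$ actually falls out of the computation, so you were right to make it explicit. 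Your closing remark is a genuine sharpening rather than a throwaway: the same factorization gives a unique covering of $\Gamma_D-S$, hence $d_{k-m}=1$ and the bound $\min(m,k-m)$, and at $m=1$ this recovers the preceding forcing-edge theorem, which the corollary as literally stated (bound $k-1$) does not. The only points worth a sentence of care in a final write-up are that the induced subgraph inherits a plane embedding (so Theorem \ref{Kweight} applies to it directly, with a freshly chosen Kasteleyn weighting rather than a restricted one), and that all Dehn weights $\mu(e)$ are $\pm1$, which is the paper's standing convention from Figure \ref{Dehnweight}.
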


\begin{example}
Consider the alternating diagram for the knot $8_8$ and its BOT graph shown in Figure \ref{BOTex}. One can verify that there are 25 dimer coverings. Thus $\textup{det}(K) = d_8 = 25$. This means that the $\textup{SNF}(8_8)=(1,1,1,1,1,1, 1, 25)$ or $\textup{SNF}(8_8)=(1, 1,1,1,1, 1,5, 5)$. 

The third part of Figure \ref{BOTex} shows the subgraph induced on all but two vertices. By Corollary \ref{removal}, the number of perfect matchings in the subgraph is the corresponding $7\times 7$ minor. All forced edges are red; there are only two possible matchings. This means $d_7=1$ or $d_7=2$. Since $d_8$ is odd and $d_7$ must divide $d_8$, we know that $d_7=1$.  We conclude that $\textup{SNF}(8_8)=(1,1,1,1,1,1, 1, 25)$, and the coloring torsion module is cyclic.
\begin{figure}[h]
\begin{tikzpicture}[scale=.7]
\draw [style=ultra thick] (0,0) to[out=0, in=135](1, -1) to[out=-45, in=-135] (1.8, -1.1);
\draw [style=ultra thick] (2.1, -.8) to[out=60, in=-60] (2,2) to[out=120, in=-120] (1.9,4);
\draw[style=ultra thick] (2.1, 4.3) to[out=45, in=45] (3,3) to [out=225, in=30] (2.1,2.2);
\draw[style=ultra thick] (1.8, 2.1) to[out=190, in=-20] (0,2) to[out=170, in=70] (0,4);
\draw[style=ultra thick](.1, 4.4) to[out=70, in=135] (2.1,4.1) to[out=-45, in=115] (2.8, 3.1);
\draw[style=ultra thick] (3, 2.8) to[out=-65, in=0] (2,-1) to[out=180, in=45] (1.1, -.9);
\draw[style=ultra thick] (.9, -1.2) to[out=-135, in=-120] (-.2,0) to[out=70, in=-120] (-.1,1.8);
\draw[style=ultra thick] (.1, 2.1) to[out=60, in=0] (0,4.3) to[out=180, in=170] (-.4, 0);
\end{tikzpicture}\hspace{.4in}
\begin{tikzpicture}[scale=.7]
\draw [color=gray] (0,0) to[out=0, in=135](1, -1) to[out=-45, in=-135] (1.8, -1.1);
\draw [color=gray] (2.1, -.8) to[out=60, in=-60] (2,2) to[out=120, in=-120] (1.9,4);
\draw[color=gray] (2.1, 4.3) to[out=45, in=45] (3,3) to [out=225, in=30] (2.1,2.2);
\draw[color=gray] (1.8, 2.1) to[out=190, in=-20] (0,2) to[out=170, in=70] (0,4);
\draw[color=gray](.1, 4.4) to[out=70, in=135] (2.1,4.1) to[out=-45, in=115] (2.8, 3.1);
\draw[color=gray] (3, 2.8) to[out=-65, in=0] (2,-1) to[out=180, in=45] (1.1, -.9);
\draw[color=gray] (.9, -1.2) to[out=-135, in=-120] (-.2,0) to[out=70, in=-120] (-.1,1.8);
\draw[color=gray] (.1, 2.1) to[out=60, in=0] (0,4.3) to[out=180, in=170] (-.4, 0);

\draw[style= ultra thick] (2.05,4.1)--(2.75,3.75);
\draw[style= ultra thick] (2.05,4.1)--(2.25,3);
\draw[style= ultra thick] (2.05,4.1)--(1,3);
\draw[style= ultra thick] (0.1,4.3)--(1,3);
\draw[style= ultra thick] (0.1,4.3)--(-1,2) ;
\draw[style= ultra thick] (0.1,4.3)--(0,3);
\draw[style= ultra thick] (0,2)--(1,3);
\draw[style= ultra thick] (0,2)--(-1,2);
\draw[style= ultra thick] (0,2)--(0,3);
\draw[style= ultra thick] (0,2)--(1,1);
\draw[style= ultra thick] (-.2,0)--(-1,2);
\draw[style= ultra thick]  (2,-1)--(1,1);
\draw[style= ultra thick] (1, -1)--(1,1);
\draw[style= ultra thick] (1.9,2.1)--(1,1);
\draw[style= ultra thick] (-.2,0)--(1,1);
\draw[style= ultra thick] (1.9,2.1)--(1,3);
\draw[style= ultra thick] (2.95,2.95)--(2.75,3.75);
\draw[style= ultra thick] (2.95,2.95)--(2.25,3);
\draw[style= ultra thick](1.9,2.1)--(2.25,3);
\draw[style= ultra thick](-.2,0)--(.25,-.5);
\draw[style= ultra thick](1,-1)--(.25,-.5);
\draw[style= ultra thick](1,-1)--(1.45, -1);
\draw[style= ultra thick](2,-1)--(1.45, -1);

\draw[fill=black,radius=.15] (2.05,4.1)circle;
\draw[fill=black,radius=.15] (1.9,2.1)circle;
\draw[fill=black,radius=.15] (2,-1)circle;
\draw[fill=black,radius=.15] (1, -1)circle;
\draw[fill=black,radius=.15] (-.2,0)circle;
\draw[fill=black,radius=.15] (0,2)circle;
\draw[fill=black,radius=.15] (0.1,4.3)circle;
\draw[fill=black,radius=.15] (2.95,2.95)circle;

\draw[fill=white,radius=.13] (-1,2)circle;
\draw[fill=white,radius=.13] (0,3)circle;
\draw[fill=white,radius=.13] (1,3)circle;
\draw[fill=white,radius=.13] (1,1)circle;
\draw[fill=white,radius=.13] (2.25,3)circle;
\draw[fill=white,radius=.13] (2.75,3.75)circle;
\draw[fill=white,radius=.13] (1.45,-1)circle;
\draw[fill=white,radius=.13] (.25,-.5)circle;

\node at (3, 1) {\Large{$*$}};

\end{tikzpicture}\hspace{.5in}
\raisebox{7pt}{\begin{tikzpicture}[scale=.7]

\draw[style= ultra thick] (0.1,4.3)--(1,3);
\draw[style= ultra thick] (0.1,4.3)--(-1,2) ;
\draw[style= ultra thick] (0.1,4.3)--(0,3);
\draw[style= ultra thick] (0,2)--(1,3);
\draw[style= ultra thick] (0,2)--(-1,2);
\draw[style= ultra thick] (0,2)--(0,3);
\draw[style= ultra thick, red] (-.2,0)--(-1,2);
\draw[style= ultra thick] (1.9,2.1)--(1,3);
\draw[style= ultra thick, red] (2.95,2.95)--(2.75,3.75);
\draw[style= ultra thick] (2.95,2.95)--(2.25,3);
\draw[style= ultra thick, red](1.9,2.1)--(2.25,3);
\draw[style= ultra thick](-.2,0)--(.25,-.5);
\draw[style= ultra thick, red](1,-1)--(.25,-.5);
\draw[style= ultra thick](1,-1)--(1.45, -1);
\draw[style= ultra thick, red](2,-1)--(1.45, -1);

\draw[fill=black,radius=.15] (1.9,2.1)circle;
\draw[fill=black,radius=.15] (2,-1)circle;
\draw[fill=black,radius=.15] (1, -1)circle;
\draw[fill=black,radius=.15] (-.2,0)circle;
\draw[fill=black,radius=.15] (0,2)circle;
\draw[fill=black,radius=.15] (0.1,4.3)circle;
\draw[fill=black,radius=.15] (2.95,2.95)circle;

\draw[fill=white,radius=.13] (-1,2)circle;
\draw[fill=white,radius=.13] (0,3)circle;
\draw[fill=white,radius=.13] (1,3)circle;
\draw[fill=white,radius=.13] (2.25,3)circle;
\draw[fill=white,radius=.13] (2.75,3.75)circle;
\draw[fill=white,radius=.13] (1.45,-1)circle;
\draw[fill=white,radius=.13] (.25,-.5)circle;
\end{tikzpicture}}
\caption{A BOT graph and induced subgraph for $8_8$}\label{BOTex}
\end{figure}
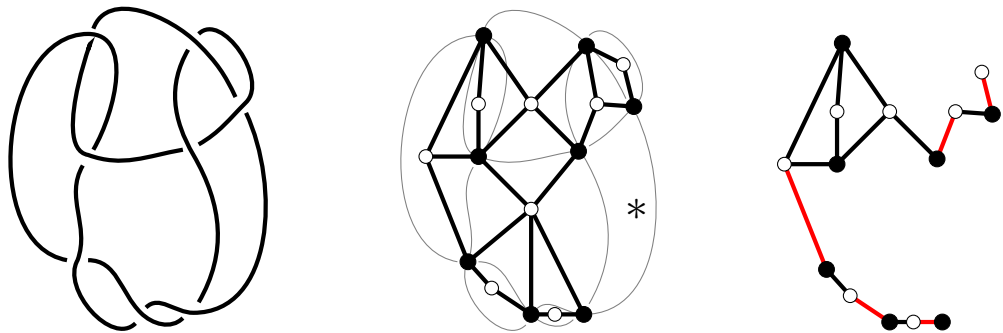
\end{example}

\bibliographystyle{plain}
\bibliography{DimerRefs-arxiv}

\begin{thebibliography}{10}

\bibitem{carter2014three}
J~Scott Carter, Daniel~S Silver, and Susan~G Williams.
\newblock Three dimensions of knot coloring.
\newblock {\em The American Mathematical Monthly}, 121(6):506--514, 2014.

\bibitem{cohen2012determinant}
Moshe Cohen.
\newblock A determinant formula for the jones polynomial of pretzel knots.
\newblock {\em Journal of Knot Theory and Its Ramifications}, 21(06), 2012.

\bibitem{CDR}
Moshe Cohen, Oliver~T. Dasbach, and Heather~M. Russell.
\newblock A twisted dimer model for knots.
\newblock {\em Fund. Math.}, 225:57--74, 2014.

\bibitem{cohen2012kauffman}
Moshe Cohen and Mina Teicher.
\newblock Kauffman's clock lattice as a graph of perfect matchings: a formula
  for its height.
\newblock {\em arXiv preprint arXiv:1211.2558}, 2012.

\bibitem{crowell1959nonalternating}
Richard~H Crowell et~al.
\newblock Nonalternating links.
\newblock {\em Illinois Journal of Mathematics}, 3(1):101--120, 1959.

\bibitem{dasbach2008jones}
Oliver~T Dasbach, David Futer, Efstratia Kalfagianni, Xiao-Song Lin, and Neal~W
  Stoltzfus.
\newblock The jones polynomial and graphs on surfaces.
\newblock {\em Journal of Combinatorial Theory, Series B}, 98(2):384--399,
  2008.

\bibitem{ge2012equivalence}
Jun Ge, Slavik Jablan, Louis~H Kauffman, and Pedro Lopes.
\newblock Equivalence classes of colorings.
\newblock {\em arXiv preprint arXiv:1208.0993}, 2012.

\bibitem{harary1999knots}
Frank Harary and Louis~H Kauffman.
\newblock Knots and graphs i?arc graphs and colorings.
\newblock {\em Advances in Applied Mathematics}, 22(3):312--337, 1999.

\bibitem{kasteleyn1967graph}
Pieter~Willem Kasteleyn.
\newblock Graph theory and crystal physics.
\newblock {\em Graph theory and theoretical physics}, 1:43--110, 1967.

\bibitem{kauffman2006formal}
Louis~H Kauffman.
\newblock {\em Formal knot theory}.
\newblock Courier Dover Publications, 2006.

\bibitem{kenyon3129lectures}
R~Kenyon.
\newblock Lectures on dimers. ias/park city mathematical series, vol. 16:
  Statistical mechanics, ams, 2009.
\newblock {\em arXiv preprint arXiv:0910.3129}.

\bibitem{kuperberg1994symmetries}
Greg Kuperberg.
\newblock Symmetries of plane partitions and the permanent?determinant method.
\newblock {\em Journal of Combinatorial Theory, Series A}, 68(1):115--151,
  1994.

\bibitem{kuperberg1998exploration}
Greg Kuperberg.
\newblock An exploration of the permanent-determinant method.
\newblock {\em JOURNAL OF COMBINATORICS}, 5:707--740, 1998.

\bibitem{kuperberg2002kasteleyn}
Greg Kuperberg.
\newblock Kasteleyn cokernels.
\newblock {\em Electron. J. Combin}, 9(1):R29, 2002.

\bibitem{livingston1993knot}
Charles Livingston.
\newblock {\em Knot theory}, volume~24.
\newblock Cambridge University Press, 1993.

\bibitem{lowrance2008knot}
Adam Lowrance.
\newblock On knot floer width and turaev genus.
\newblock {\em Algebraic \& Geometric Topology}, 8(2):1141--1162, 2008.

\bibitem{mattman2009proof}
Thomas~W Mattman and Pablo Solis.
\newblock A proof of the kauffman--harary conjecture.
\newblock {\em Algebraic \& Geometric Topology}, 9(4):2027--2039, 2009.

\bibitem{newman1997smith}
Morris Newman.
\newblock The smith normal form.
\newblock {\em Linear algebra and its applications}, 254(1):367--381, 1997.

\bibitem{ozsvath2003heegaard}
Peter Ozsv{\'a}th and Zolt{\'a}n Szab{\'o}.
\newblock Heegaard floer homology and alternating knots.
\newblock {\em Geometry \& Topology}, 7(1):225--254, 2003.

\bibitem{przytycki19983}
J{\'o}zef Przytycki.
\newblock 3-coloring and other elementary invariants of knots.
\newblock {\em Banach Center Publications}, 1(42):275--295, 1998.

\bibitem{rolfsen1976knots}
Dale Rolfsen.
\newblock {\em Knots and links}, volume 346.
\newblock American Mathematical Soc., 1976.

\bibitem{temperley1961dimer}
Harold Neville~Vazeille Temperley and Michael~E Fisher.
\newblock Dimer problem in statistical mechanics-an exact result.
\newblock {\em Philosophical Magazine}, 6(68):1061--1063, 1961.

\bibitem{wagner1937eigenschaft}
Klaus Wagner.
\newblock {\"U}ber eine eigenschaft der ebenen komplexe.
\newblock {\em Mathematische Annalen}, 114(1):570--590, 1937.

\bibitem{ZZ}
Heping Zhang and Fuji Zhang.
\newblock Plane elementary bipartite graphs.
\newblock {\em Discrete Applied Mathematics}, 105(1):291--311, 2000.

\end{thebibliography}

\end{document}